\newtheorem*{mainthm1}{First Main Theorem}
\newtheorem*{mainthm2}{Second Main Theorem}
\newtheorem{theorem}{Theorem}
\newtheorem{thm}[theorem]{Theorem}
\newtheorem*{theorem*}{Theorem}
\newtheorem{prop}[theorem]{Proposition}
\theoremstyle{definition}
\newtheorem*{definition*}{Definition}
\newtheorem{definition}[theorem]{Definition}
\newtheorem{remark}[theorem]{Remark}
\newtheorem{example}[theorem]{Example}
\newtheorem{exmp}[theorem]{Example}
\newcommand{\bpm}{\begin{pmatrix}}
\newcommand{\epm}{\end{pmatrix}}
\newcommand{\onto}{\twoheadrightarrow}
\newcommand{\spn}{\operatorname{span}}
\newcommand{\aff}[1]{\widetilde{#1}}
\newcommand{\mov}{\operatorname{mov}}
\newcommand{\fixaff}{\operatorname{fix}_{\mathrm{aff}}}
\newcommand{\fixlin}{\operatorname{fix}_{\mathrm{lin}}}
\newcommand{\im}{\operatorname{im}}
\newcommand{\codim}{\operatorname{codim}}
\newcommand{\GL}{\operatorname{GL}}
\newcommand{\GA}{\operatorname{GA}}
\newcommand{\VV}{\widetilde{V}}
\newcommand{\FF}{\mathbf{F}}
\newcommand{\RR}{\mathbf{R}}
\renewcommand{\gg}{\aff{g}}
\newcommand{\rr}{\aff{r}}
\newcommand{\affR}{\aff{R}}
\begin{document}

\title{Reflection length in the general linear and affine groups}
\author{Elise G delMas}
\address{
School of Mathematics \\
University of Minnesota \\
Minneapolis, MN 55455}
\email{delma007@umn.edu}
\author{Joel Brewster Lewis}
\address{
Department of Mathematics \\ 
George Washington University \\ 
Washington, DC 20052}
\email{jblewis@gwu.edu}

\begin{abstract}
We give an intrinsic criterion to tell whether a reflection factorization in the general linear group is reduced, and give a formula for computing reflection length in the general affine group.
\end{abstract}

\maketitle

Given a group $G$ with generating set $R$, the problem of computing the shortest length $\ell_R(g)$ of a tuple $(r_1, \ldots, r_k)$ of elements of $R$ whose product is a given element $g = r_1 \cdots r_k$ of $G$ is in general undecidable.  However, for certain reflection groups $G$, this function is not only easy to compute but in fact carries an intrinsic, geometric meaning.
\begin{theorem}[{{\cite[Lem.~2]{Carter}, 
\cite[Thm.~1]{Scherk},
\cite[Thm.~1]{Dieudonne}}}]
\label{thm:linear}
If $G$ is 
\begin{compactenum}[\quad (a)]
\item a finite real reflection group, or 
\item the real orthogonal group $O_n(\RR)$, or
\item the general linear group $\GL(V)$ of a finite-dimensional vector space $V$, 
\end{compactenum}
and $R$ is the set of elements in $G$ that fix a hyperplane pointwise (the \emph{reflections}), then the reflection length
$\ell_R(g)$ of any element $g$ in $G$ is equal to $\dim \im(g - 1) = \codim \ker(g - 1)$.
\end{theorem}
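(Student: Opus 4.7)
The plan is to prove both inequalities $\ell_R(g) \geq d(g)$ and $\ell_R(g) \leq d(g)$, where $d(g) := \dim\im(g-1)$, focusing on case (c) since (a) and (b) follow the same scheme. For the lower bound, the first step is to show that $d$ is subadditive: for any $g, h \in \GL(V)$, $d(gh) \leq d(g) + d(h)$. This follows from the identity $gh - 1 = (g-1)h + (h-1)$, which yields $\im(gh-1) \subseteq \im(g-1) + \im(h-1)$ using that $h$ is invertible. Since every reflection $r$ has $d(r) = 1$, applying subadditivity inductively to any factorization $g = r_1 \cdots r_k$ gives $d(g) \leq k$, hence $\ell_R(g) \geq d(g)$.

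For the reverse inequality, I would induct on $d = d(g)$. The base case $d = 0$ gives $g = 1$ as the empty product. For the inductive step, it suffices to construct a reflection $r$ with $d(rg) = d - 1$; the inductive hypothesis applied to $rg$ then factors $g = r\inv \cdot (rg)$ into $d$ reflections (noting that $r\inv$ is again a reflection, since it fixes the same hyperplane). The natural construction is: pick $v \notin \fix(g)$ and seek a reflection $r$ that fixes $\fix(g)$ pointwise and sends $gv$ to $v$, which forces $\fix(rg) \supseteq \fix(g) \oplus \spn(v)$. Such an $r$ is specified by a hyperplane $H \supseteq \fix(g)$ with $gv \notin H$ and $v - gv \notin H$, together with a complementary line $L$ on which $r$ scales by the unique factor realizing $r(gv) = v$.

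The main technical obstacle is proving that such a $v$ and $H$ always exist. I would split into cases based on $\fix(g) \cap \im(g-1)$. In the easy case $\fix(g) \cap \im(g-1) = 0$, we have $V = \fix(g) \oplus \im(g-1)$ and $g$ preserves $\im(g-1)$; taking any $v \in \im(g-1) \setminus \{0\}$, the required $H$ exists because one needs only a hyperplane of $\im(g-1)$ avoiding the two nonzero vectors $gv$ and $v - gv$. In the hard case $\fix(g) \cap \im(g-1) \neq 0$, a more delicate argument is needed, typically by first applying an auxiliary reflection to shift into the easy case using the Jordan structure of $g - 1$ on the generalized $1$-eigenspace of $g$. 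Cases (a) and (b) follow the same outline but are cleaner, because an inner product provides a canonical hyperplane reflection sending $gv$ to $v$ whenever $gv \neq v$ (modulo a standard two-reflection workaround when $gv = -v$); in case (a) one must additionally ensure that the chosen reflections belong to the finite group $G$, which requires a more careful analysis using the root system structure as in Carter's original argument.
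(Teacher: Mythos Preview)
This theorem is not proved in the paper; it is stated with citations to Carter, Scherk, and Dieudonn\'e and used as a black box throughout. (Section~\ref{sec:remarks} points to \cite[Prop.~2.16]{HLR} for a short direct proof of part~(c).) So there is no in-paper proof to compare against, and I comment only on the soundness of your sketch.

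Your lower bound via subadditivity of $d(g) := \dim\im(g-1)$ is correct and standard. For the upper bound, the inductive strategy is also standard, but there are two issues with your execution. First, the split on whether $\fix(g) \cap \im(g-1)$ vanishes is unnecessary, and your ``hard case'' is left essentially unproved (``a more delicate argument is needed'' is not an argument). A uniform construction works: for any $v \notin \fix(g)$, both $v$ and $gv$ have nonzero image in $V/\fix(g)$, so one can always choose a hyperplane $H \supseteq \fix(g)$ containing neither $v$ nor $gv$. Then $r(x) = x + \alpha(x)\cdot w$ with $\ker\alpha = H$ and $w = (v - gv)/\alpha(gv)$ is a reflection (invertibility follows from $\alpha(v) \neq 0$ via the last clause of Proposition~\ref{Vectors}) sending $gv \mapsto v$, whence $\fix(rg) \supseteq \fix(g) \oplus \FF v$ and $d(rg) = d(g) - 1$. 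No Jordan-block analysis is needed.

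Second, your description of $r$ as ``scaling on a complementary line $L$'' produces only \emph{semisimple} reflections. Over $\FF_2$ there are none---every reflection is a transvection---so part~(c) over $\FF_2$ is not covered by your construction, even though the statement includes it. The formula above, which allows $w \in H$, handles all fields at once. A related slip: the invertibility condition you actually need is $v \notin H$, which you omit; the condition $v - gv \notin H$ that you do impose is exactly what forces $r$ to be semisimple rather than a transvection.
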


Here the subspace $\ker(g - 1) =: V^g$ is the \emph{fixed space} of $g$, consisting of all points in $V$ that do not move under the action of $g$, while the complementary space $\im(g - 1)$ is the \emph{moved space} of $g$, consisting of all vectors by which elements of $V$ can be displaced under the action of $g$.

In the cases of a finite real reflection group and the full orthogonal group $O_n(\RR)$, Theorem~\ref{thm:linear} comes with a complementary result, giving a simple rule for determining when a reflection factorization is of minimal length.  Say that a set of lines through the origin is \emph{independent} if the associated direction vectors are linearly independent, and similarly say a set of hyperplanes is independent if the associated linear forms are linearly independent.

\begin{theorem}[{{\cite[Lem.~3]{Carter}, 
\cite[Prop.~2 \& Cor.~1]{BradyWattEuclidean}}}]
\label{thm:linear2}
If $G$ is 
\begin{compactenum}[\quad (a)]
\item a finite real reflection group or 
\item the real orthogonal group $O_n(\RR)$,
\end{compactenum}
then a tuple $(r_1, \ldots, r_k)$ of reflections in $G$ is a shortest reflection factorization of its product (i.e., $\ell_R(r_1 \cdots r_k) = k$) if and only if the set of fixed hyperplanes (equivalently, the set of moved spaces) of the factors is independent.
\end{theorem}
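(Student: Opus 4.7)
The plan is to leverage Theorem~\ref{thm:linear}, which identifies the reflection length $\ell_R(g)$ with $\dim \mov(g) = \dim \im(g-1)$. A reflection factorization $(r_1, \ldots, r_k)$ of $g$ is thus of minimum length if and only if $k = \dim \mov(g)$, so everything reduces to a linear-algebraic comparison of $\mov(g)$ with the one-dimensional subspaces $\mov(r_i)$. I will work throughout with moved lines; the equivalence between independence of moved lines and independence of fixed hyperplanes comes for free by passing to orthogonal complements with respect to the $G$-invariant inner product, since for a real reflection $r$ one has $\fix(r) = \mov(r)^\perp$.

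The engine of the proof is the telescoping identity
\[
(g - 1)v \;=\; \sum_{i=1}^{k} (r_i - 1)\bigl(r_{i+1} \cdots r_k v\bigr),
\]
valid for any $v \in V$ and obtained by inserting and cancelling the successive terms $r_{i+1} \cdots r_k v$. Since the $i$-th summand lies in $\mov(r_i)$, this gives the inclusion $\mov(g) \subseteq \sum_{i=1}^{k} \mov(r_i)$ and hence $\dim \mov(g) \leq k$. This immediately yields the forward direction: if the factorization is minimal, then Theorem~\ref{thm:linear} gives $k = \ell_R(g) = \dim \mov(g) \leq \dim \sum_i \mov(r_i) \leq k$, forcing equality throughout, so the moved lines must be independent.

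For the converse, I would assume the lines $\mov(r_i)$ are independent and let $v \in \fix(g)$ be arbitrary. The telescoping identity becomes $\sum_{i=1}^{k} (r_i - 1)(r_{i+1} \cdots r_k v) = 0$, an equation in which the $i$-th term lies in the independent line $\mov(r_i)$, so each summand must vanish. Reading these relations from right to left, the $i = k$ term gives $r_k v = v$, the $i = k-1$ term then gives $r_{k-1} v = v$, and iterating shows $v \in \bigcap_i \fix(r_i)$. Combined with the trivial inclusion $\bigcap_i \fix(r_i) \subseteq \fix(g)$, this yields $\fix(g) = \bigcap_i \fix(r_i) = \bigl(\sum_i \mov(r_i)\bigr)^\perp$, a subspace of codimension $k$. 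Therefore $\dim \mov(g) = k = \ell_R(g)$, and the factorization is minimal. I do not anticipate any real obstacle: Theorem~\ref{thm:linear} does the heavy lifting, and what remains is careful bookkeeping around the telescoping identity together with the orthogonal-complement equivalence of the two independence conditions.
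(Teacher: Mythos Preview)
The paper does not actually prove Theorem~\ref{thm:linear2}; it is stated with citations to Carter and Brady--Watt as a known result motivating the First Main Theorem. So there is no ``paper's own proof'' to compare against directly.

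Your argument is correct, and it is in fact the orthogonal specialization of the paper's proof of its generalization, Theorem~\ref{thm:elise}. Both proofs rest on the same telescoping identity (your displayed sum is exactly the expansion appearing in Proposition~\ref{prop:easy linear preliminary} and in the proof of Theorem~\ref{thm:elise}(b)). For the converse direction, the paper in the $\GL(V)$ setting must introduce auxiliary linear forms $\beta_j$ dual to the $v_i$ in order to peel off the summands one by one; you avoid this because in the orthogonal case the independence of the moved lines $\mov(r_i)$ directly forces each summand in a zero sum to vanish, and the passage between independence of moved lines and independence of fixed hyperplanes is free via $\fix(r) = \mov(r)^\perp$. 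In $\GL(V)$ that orthogonal-complement shortcut is unavailable, which is precisely why the paper's First Main Theorem needs \emph{both} independence hypotheses rather than just one.
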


The first main result of this paper is to extend this result to the full general linear group
$\GL(V)$, giving a complete set of results that parallel Theorem~\ref{thm:linear}.  It is perhaps the simplest possible extension to the new context.

\begin{mainthm1}
When $G = \GL(V)$ is the general linear group of a finite-dimensional vector space $V$, 
a tuple $(r_1, \ldots, r_k)$ of reflections in $G$ is a shortest reflection factorization of its product if and only if both the set of reflecting hyperplanes and the set of moved spaces of the factors are independent.
\end{mainthm1}

In fact, we show more: if only one of the two independences holds, then the rank of the dependent collection gives the reflection length of the product.  These results are proved below as Theorem~\ref{thm:elise}.

Recently, there has also been work done extending Theorem~\ref{thm:linear} to the affine setting.  In this context, the group $G$ acts not on a vector space $V$ but on an affine space $\aff{V}$, and is generated by its subset of affine reflections, i.e., the elements that fix an (affine) hyperplane in $\aff{V}$ pointwise.   In this context, the \emph{motion} of a point $x$ in $\aff{V}$ under an element $g$ is the vector $g(x) - x$ in $V$, and the \emph{moved space} $\mov(g)$ is the set of motions of all points in $\aff{V}$ under the action of $g$.  One can show that this moved space is always an affine subspace of $V$, and so has a meaningful notion of dimension.  These definitions lead to the following result.
\begin{theorem*}[{{\cite[Thm.~5.7]{BradyMcCammond}, \cite[Thm.~A]{LMPS}}}]
If $G$ is
\begin{compactenum}[\quad (a)]
\item an affine Coxeter group or 
\item the group of isometries of real Euclidean space,
\end{compactenum}
then the reflection length $\ell_R(g)$ of any element $g$ in $G$ is equal to $\dim(\mov(g)) + d_G(g)$, where $d_G$ is a geometric statistic that varies with the group $G$. 
\end{theorem*}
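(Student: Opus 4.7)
The plan is to prove the theorem via matching upper and lower bounds on $\ell_R(g)$. In both settings, the moved space $\mov(g)$ is an affine translate of the linear moved space $\im(L-I)$ of the linear part $L$ of $g$, and it contains the origin exactly when $g$ has a fixed point in $\aff{V}$. Accordingly, I expect $d_G$ to vanish when $g$ has a fixed point and to equal $2$ in the fixed-point-free case for the Euclidean isometry group, with the affine Coxeter version requiring a more refined geometric statistic.

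For the upper bound, I would split into cases. If $g$ fixes a point $p$, taking $p$ as origin expresses $g$ as a linear transformation; Theorem~\ref{thm:linear} then factors the linear part into $\dim\mov(g)$ reflections of $\GL(V)$, whose associated affine hyperplanes through $p$ give a factorization of $g$ of the desired length. If $g$ is fixed-point-free, I would write $g = t \circ g'$ with $g'$ fixing a point and $t$ a nonzero translation; the first case factors $g'$ into $\dim\mov(g)$ reflections, while $t$ is a product of two parallel affine reflections, giving length $\dim\mov(g) + 2$.

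For the lower bound, given any factorization $g = r_1 \cdots r_k$, passing to linear parts gives $L = L_1 \cdots L_k$ with each $L_i$ a linear reflection, so Theorem~\ref{thm:linear} applied in $\GL(V)$ yields $k \geq \dim\im(L-I) = \dim\mov(g)$. A determinant parity count forces $k \equiv \dim\mov(g) \pmod{2}$, so any strict inequality already gives $k \geq \dim\mov(g) + 2$. To preclude $k = \dim\mov(g)$ when $g$ is fixed-point-free, I would argue by contradiction: equality would make $L_1, \ldots, L_k$ a shortest reflection factorization of $L$ in $\GL(V)$, so by Theorem~\ref{thm:linear2} their moved spaces and their fixing hyperplanes are both linearly independent, and one can then shift the affine hyperplanes of the $r_i$ (without changing their linear parts) to arrange a common fixed point for the product $g$, contradicting the hypothesis.

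The main obstacle will be making this hyperplane-shifting argument rigorous in the affine Coxeter setting, where the available reflections are constrained to an affine root system and one cannot freely translate their hyperplanes. Here I expect $d_G(g)$ to measure an obstruction living in an appropriate coroot lattice, capturing the portion of the translation part of $g$ that cannot be absorbed by factorizations of its linear part within the Coxeter group; identifying the correct statistic and establishing the matching lower bound will likely require a careful analysis of the affine root system data, or a case-by-case treatment via the classification of irreducible affine Coxeter groups.
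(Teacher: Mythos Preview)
The paper does not prove this theorem at all: it is quoted from \cite{BradyMcCammond} and \cite{LMPS}, and Section~\ref{sec:affine statements} only restates the precise versions (Theorems~\ref{thm:BM} and~\ref{thm:LMPS}) without proof. So there is no ``paper's own proof'' to compare against; your proposal is an attempt to reproduce results from those external references.

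That said, your sketch for the Euclidean isometry group contains a real error in the lower-bound step. You write that, assuming $k=\dim\mov(g)$, ``one can then shift the affine hyperplanes of the $r_i$ (without changing their linear parts) to arrange a common fixed point for the product $g$.'' Shifting the hyperplanes changes the $r_i$ and hence changes the product; the resulting element is no longer $g$, so nothing is contradicted. The correct argument is simpler and requires no shifting: once Theorem~\ref{thm:linear2} gives that the linear directions of the fixed hyperplanes are independent, the affine hyperplanes $\fixaff(r_1),\ldots,\fixaff(r_k)$ already have nonempty intersection (independent affine hyperplanes in $\RR^n$ always meet). Any point in that intersection is fixed by every $r_i$ and hence by $g$, contradicting the assumption that $g$ is fixed-point-free. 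This is essentially the mechanism the present paper uses in Proposition~\ref{prop:elliptic converse} for $\GA(\VV)$; note, incidentally, that the paper cannot and does not use your determinant-parity argument there, since transvections in $\GL(V)$ have determinant $1$.

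For the affine Coxeter case you do not actually propose a proof, and your guess at the statistic is off: the result of \cite{LMPS} gives $d_G(g)=2d(g)$ with $d(g)=\dim_{\mathrm{lin}}(\mov(g))-\dim(\mov(g))$, where $\dim_{\mathrm{lin}}$ measures the smallest moved space of an element of the underlying finite Coxeter group containing $\mov(g)$. This is not a coroot-lattice obstruction of the type you describe, and the proof in \cite{LMPS} is substantially more involved than either your outline or the arguments in this paper.
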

(A precise version of this statement is given in Section~\ref{sec:affine statements}.)
The second main result of this paper is to extend this result to the general affine group of all affine transformations of a finite dimensional affine space, again giving a complete set of results that parallel Theorem~\ref{thm:linear}.  It is proved below as Theorem~\ref{main affine theorem}.

\begin{mainthm2}
When $G = \GA(\VV)$ is the general affine group of a finite-dimensional affine space $\VV$, the reflection length $\ell_{\affR}(\gg)$ of any element $\gg$ in $G$ is equal to $\dim(\mov(g)) + d_G(\gg)$.  Here $d_G(\gg)$ is equal to $0$, $1$, or $2$, depending on whether $\gg$ is \emph{elliptic}, \emph{parabolic}, or \emph{hyperbolic} (see Definitions~\ref{def:tripartite} and~\ref{def:tripartite for F_2}).
\end{mainthm2}

The proofs of our two main theorems are given in Sections~\ref{sec:linear} and~\ref{sec:affine group}, respectively.  In Section~\ref{sec:remarks}, we include a number of additional remarks.

\section{The general linear group}
\label{sec:linear}
Let $G = \GL(V)$ be the general linear group of a finite-dimensional vector space $V$ over the field $\FF$.  The \emph{fixed space} $V^g$ of an element $g$ in $G$ is the set
\[
V^g := \ker(g - 1) = \{v\in V\mid g(v) = v\}.
\]
An element $s$ in $G$ is a \emph{reflection} if its fixed space has codimension $1$, that is, if $s$ fixes a hyperplane.\footnote{Because we do not require orthogonality, this is not the same as the usual definition over the real numbers.  However, in the context of the real orthogonal group or a finite real reflection group, the reflections under this more general definition are exactly the usual orthogonal reflections.  Bourbaki refers to our reflections as \emph{pseudo-reflections} \cite[Ch.~5, \S2.1]{Bourbaki}, except that they do not require pseudo-reflections to be invertible maps.}   Let $R$ be the set of all reflections in $\GL(V)$.  All of the elementary matrices used in Gaussian elimination are reflections in this sense, so the fact that every invertible matrix is row-reducible to the identity implies that $G$ is generated by $R$.  The \emph{reflection length} of an element $g\in G$ is defined by 
\[
\ell_R(g) := \min\{k \mid g = r_1r_2\cdots r_k \text{ for some } r_i \in R\}.
\]

In the real orthogonal setting, a reflection is determined uniquely by its fixed hyperplane.  Equivalently, it is determined by its \emph{root}, the vector orthogonal to the hyperplane.  If $r$ is an orthogonal reflection with root $\lambda$ of unit length, the action of $r$ on a vector $x \in V$ is given by $r(x) = x - 2\langle \lambda, x\rangle \cdot \lambda$.  The map $x \mapsto -2\langle \lambda, x \rangle$ is a nonzero linear form, i.e., it is a nontrivial linear map from $V$ to the underlying field (in this case, $\RR$).  In general, the collection of linear forms on a vector space $V$ forms the \emph{dual space} $V^*$.  The next result generalizes the formula for orthogonal reflections to describe arbitrary reflections in $\GL(V)$.

\begin{prop}
\label{Vectors}
For any reflection $t$ in $G = \GL(V)$, there exists a nonzero vector $v_t \in V$ and a nonzero linear form $\alpha_t \in V^*$ such that for all $x\in V$, the action of $t$ is given by
\[ 
t(x) = x + \alpha_t(x) \cdot v_t.
\]
Moreover, in this case we have $V^t = \ker(\alpha_t)$ and $\im(t - 1) = \FF \cdot v_t$.  Finally, a pair $(v, \alpha)$ as above defines a reflection if and only if $\alpha(v) \neq -1$.
\end{prop}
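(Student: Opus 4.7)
The plan is to extract $v_t$ and $\alpha_t$ directly from the rank-nullity structure of a reflection, then reverse the construction for the final criterion.

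First I would use the hypothesis that $V^t = \ker(t-1)$ is a hyperplane, so by rank-nullity $\im(t-1)$ is one-dimensional. Pick any nonzero $v_t$ spanning $\im(t-1)$. Then for every $x \in V$, the vector $(t-1)(x)$ lies in $\FF \cdot v_t$, so there is a unique scalar $\alpha_t(x) \in \FF$ with
\[
(t-1)(x) = \alpha_t(x) \cdot v_t.
\]
Since $v_t \neq 0$ and $t-1$ is linear, uniqueness forces the map $x \mapsto \alpha_t(x)$ to be linear, i.e.\ an element of $V^*$; it is nonzero because $t \neq 1$. Rearranging gives the advertised formula $t(x) = x + \alpha_t(x) \cdot v_t$, and by construction $\ker(\alpha_t) = \ker(t-1) = V^t$ and $\im(t-1) = \FF \cdot v_t$.

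Next I would address the final claim. Given any nonzero $v \in V$ and nonzero $\alpha \in V^*$, define $t \colon V \to V$ by $t(x) = x + \alpha(x) \cdot v$; this is linear, and its fixed space is $\ker(\alpha)$, which is a hyperplane since $\alpha \neq 0$. Thus $t$ is a reflection precisely when $t$ is invertible. To analyze invertibility, suppose $t(x) = 0$, so $x = -\alpha(x) \cdot v$. Then $x \in \FF \cdot v$, say $x = cv$, and substituting yields $c(1 + \alpha(v)) = 0$. Hence $\ker(t) \neq 0$ iff $\alpha(v) = -1$, giving the criterion $\alpha(v) \neq -1$.

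The calculation is essentially a direct unpacking of the codimension-one hypothesis together with rank-nullity, so I do not expect a substantive obstacle. The one place requiring care is the invertibility analysis in the last sentence: one needs to observe that a hypothetical element of $\ker(t)$ is automatically constrained to the line $\FF \cdot v$, which is what reduces the problem to the single scalar equation in $\alpha(v)$.
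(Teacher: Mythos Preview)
Your proof is correct and follows essentially the same approach as the paper: extract $v_t$ from the one-dimensional image via rank--nullity, define $\alpha_t$ by the resulting scalar, and for the converse reduce the invertibility question to the line $\FF \cdot v$ to obtain the criterion $\alpha(v)\neq -1$. The arguments match almost line for line.
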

\begin{proof}
The first parts of the claim may be found in \cite[Ch.~5, \S2.1]{Bourbaki}; we include their proofs for completeness.

For any reflection $t$ in $G$, we have by definition that $\codim(V^t) = \codim(\ker(t-1)) = 1$.  By the rank--nullity theorem, $\dim(\im(t-1)) = 1$.  Let $v_t$ be a fixed basis vector for the space $\im(t-1)$. Then the action of $t - 1$ can be expressed as $(t - 1)(x) = \alpha_t(x) v_t$ for some nonzero function $\alpha_t: V \to \FF$.  Since $t - 1$ is linear, $\alpha_t$ must be linear, and so $\alpha_t \in V^*$. The first three claims follow immediately from this formula.  

For the final claim, choose a nonzero vector $v$ in $V$ and a nonzero linear form $\alpha$ in $V^*$, and define a linear map $t$ from $V$ to $V$ by the formula $t(x) := x + \alpha(x) \cdot v$.  Since $v \neq 0$, the fixed space $\ker(t - 1)$ is exactly the hyperplane $\ker \alpha$, and so the map $t$ will be a reflection as long as it actually belongs to $\GL(V)$.  The kernel of $t$ consists of those points $x$ such that $x = -\alpha(x) \cdot v$, and so is contained in the one-dimensional space $\FF \cdot v$.  Thus, the kernel is nontrivial if and only if it contains $v$, i.e., if and only if $(1 + \alpha(v)) \cdot v = 0$.  The result follows immediately.
\end{proof}

\begin{remark}
\label{rmk:transvection}
While the following terminology is not necessary for our proofs, it is worth mentioning the standard classification of reflections in $\GL(V)$. A reflection $s$ with $\beta_s := \det(s) \neq 1$ is called a \emph{semisimple reflection}; all such reflections are diagonalizable.  If $s$ is semisimple then $v_s$ is the non-unit eigenvector of $s$, and $s(v_s) = \beta_s v_s$. 

A reflection $t$ with $\det(t) = 1$ is called a \emph{transvection}; transvections are not diagonalizable.  
If $t$ is a transvection, the vector $v_t$ is called its \emph{transvector} and is in fact an element of the fixed space $V^t$.  By considering the rational canonical form of a transvection, it can be seen that its minimal polynomial is $ (y-1)^2 \in \FF[y]$, and thus $t(t-1)(x) = (t-1)(x)$ for all $x\in V$.  
\end{remark}

\begin{definition}
For an element $g\in G$, we say the ordered tuple $S = (s_1, s_2, \ldots, s_k) \in R^k$ of reflections is an \emph{ordered factorization} of $g$ if $g = s_1s_2\cdots s_k$.  (Note that we do not require $k$ to be minimal here.)  If $S$ is an ordered factorization, set $v_i:= v_{s_i}$ and $\alpha_i:= \alpha_{s_i}$ as in Proposition~\ref{Vectors}, and define the hyperplane $H_i:= V^{s_i} = \ker(\alpha_i)$.  For an ordered factorization $S$, define the \emph{$S$-fixed} and \emph{$S$-moved} spaces 
\[
V^S := \bigcap_{i = 1}^k H_i
\qquad \textrm{and} \qquad
V_S :=  \spn_{\FF}\{v_{1}, \ldots, v_{k}\}.
\]
\end{definition}

We begin the proof of the first main theorem with an easy result relating the spaces $V_S$ and $V^S$ to the fixed and moved spaces of the product of $S$.

\begin{prop}
\label{prop:easy linear preliminary}
If $S = (s_1, \ldots, s_k)$ is an ordered factorization of an element $g$ in $\GL(V)$, then
$\im(g - 1) \subseteq V_S$ and $\ker(g - 1) \supseteq V^S$.
\end{prop}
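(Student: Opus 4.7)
The plan is to verify the two containments directly from the formula $s_i(x) = x + \alpha_i(x) \cdot v_i$ provided by Proposition~\ref{Vectors}, which makes each reflection extremely transparent: it adjusts its input by a scalar multiple of $v_i$ along the one-dimensional moved space.

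For the containment $V^S \subseteq \ker(g-1)$, I would argue by induction on the length of the suffix. If $x \in V^S = \bigcap_i \ker(\alpha_i)$, then $\alpha_k(x) = 0$, so $s_k(x) = x$. But then $\alpha_{k-1}(s_k(x)) = \alpha_{k-1}(x) = 0$, so $s_{k-1} s_k(x) = x$, and continuing in this way gives $g(x) = s_1 \cdots s_k(x) = x$. This is essentially trivial once the formula from Proposition~\ref{Vectors} is in hand.

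For the containment $\im(g-1) \subseteq V_S$, the natural tool is a telescoping identity. Setting $g_i := s_i s_{i+1} \cdots s_k$ for $1 \le i \le k$ and $g_{k+1} := \mathrm{id}$, write
\[
(g-1)(x) = g_1(x) - g_{k+1}(x) = \sum_{i=1}^{k} \bigl(g_i(x) - g_{i+1}(x)\bigr) = \sum_{i=1}^{k} (s_i - 1)\bigl(g_{i+1}(x)\bigr) = \sum_{i=1}^{k} \alpha_i\bigl(g_{i+1}(x)\bigr) \cdot v_i,
\]
so $(g-1)(x)$ is an $\FF$-linear combination of $v_1, \ldots, v_k$, hence lies in $V_S$.

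Neither step looks delicate; the only point requiring a sliver of care is getting the telescoping indices right and noting that $(s_i - 1)(y) = \alpha_i(y) \cdot v_i$ holds for every vector $y$, so in particular for $y = g_{i+1}(x)$. I do not anticipate any obstacle.
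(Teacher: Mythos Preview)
Your proof is correct and follows essentially the same approach as the paper: a telescoping sum for $\im(g-1)\subseteq V_S$ and the observation that each $s_i$ fixes $V^S$ pointwise for the other containment. The only cosmetic difference is that you write out the scalar coefficients $\alpha_i(g_{i+1}(x))$ explicitly, whereas the paper simply notes that each telescoped term lies in $\im(s_i-1)$.
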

\begin{proof}
To prove the first statement, we use a telescoping sum: for every $x \in V$ we have
\begin{align*}
g(x) - x & = 
(s_1s_2\cdots s_k(x) - s_2\cdots s_k(x)) + (s_2s_3\cdots s_k(x) - s_3 \cdots s_k(x)) + \ldots + (s_k(x) - x) \\
& = (s_1 - 1)(x_1) + (s_2 - 1)(x_2) + \ldots + (s_k - 1)(x_k)
\end{align*}
for some vectors $x_1, \ldots, x_k$.  The vector $g(x) - x$ represents an arbitrary element of $\im(g - 1)$, and the final expression evidently belongs to $\im(s_1 - 1) + \ldots + \im(s_k - 1) = V_S$, as desired.

For the second statement, observe that if $x \in V^S$ then $s_i(x) = x$ for all $i$, by definition.  Thus in this case $g(x) = s_1 \cdots s_k(x) = x$, as desired.
\end{proof}

We are now prepared to prove the first main result of this paper, giving a simple geometric rule for detecting minimality of reflection factorizations in the general linear group.  The result is illustrated in Example~\ref{example} below.

\begin{theorem}
\label{dual delmas lemma}
\label{thm:elise}
\label{Facts}
Let $V$ be a finite-dimensional vector space, $g$ an arbitrary element of $\GL(V)$, and $S = (s_1, \ldots, s_k)$ a tuple of reflections such that $g = s_1 \cdots s_k$. 
We have that
\begin{enumerate}[\quad(a)]
\item if $\codim V^S = k$ then $\ell_R(g) = \dim V_S$, 
\item if $\dim V_S = k$ then $\ell_R(g) = \codim V^S$, and
\item $\ell_R(g) = k$ if and only if $\codim V^S = \dim V_S = k$.
\end{enumerate}
\end{theorem}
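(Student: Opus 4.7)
The plan is to prove part (a) by a direct computation inside $V$, deduce (b) by dualizing, and obtain (c) as a short consequence of the first two parts together with Theorem~\ref{thm:linear}.  Note that Proposition~\ref{prop:easy linear preliminary} combined with Theorem~\ref{thm:linear} already gives $\ell_R(g) = \dim \im(g-1) \leq \dim V_S \leq k$ and $\ell_R(g) = \codim \ker(g-1) \leq \codim V^S \leq k$, so for parts (a) and (b) it suffices to establish the opposite inequalities under the respective hypotheses, and (c) will follow by chasing these chains.

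For part (a), I assume that the linear forms $\alpha_1, \ldots, \alpha_k$ are linearly independent in $V^*$ (this is equivalent to $\codim V^S = k$), and aim to prove $V_S \subseteq \im(g-1)$.  By independence, I can choose vectors $y_1, \ldots, y_k \in V$ with $\alpha_i(y_j) = \delta_{ij}$, and the telescoping identity from the proof of Proposition~\ref{prop:easy linear preliminary} yields
\[
(g-1)(y_j) = \sum_{i=1}^{k} \alpha_i(x_{i,j}) \, v_i, \qquad \text{where } x_{i,j} := s_{i+1} \cdots s_k(y_j) \text{ for } i<k \text{ and } x_{k,j} := y_j.
\]
The key observation is that $s_m(y_j) = y_j + \alpha_m(y_j) v_m = y_j$ whenever $m > j$, so $x_{i,j} = y_j$ for all $i \geq j$, and hence the $k \times k$ matrix $M_{ij} := \alpha_i(x_{i,j})$ satisfies $M_{ij} = \delta_{ij}$ for $i \geq j$.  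This makes $M$ upper triangular with $1$'s on the diagonal, hence invertible, so the vectors $(g-1)(y_1), \ldots, (g-1)(y_k)$ span the same subspace of $V$ as $v_1, \ldots, v_k$, namely $V_S$.

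For part (b), I pass to the dual: each reflection $s_i$ has an adjoint $s_i^* \in \GL(V^*)$ given by $s_i^*(\beta)(x) = \beta(x) + \beta(v_i) \alpha_i(x)$, which is itself a reflection with moved line $\FF \cdot \alpha_i$ and fixed hyperplane equal to the annihilator of $\FF \cdot v_i$ in $V^*$.  The reversed tuple $T = (s_k^*, \ldots, s_1^*)$ is a length-$k$ reflection factorization of $g^*$ with $(V^*)_T = \spn\{\alpha_1, \ldots, \alpha_k\}$ and $(V^*)^T$ equal to the annihilator of $V_S$, so the hypothesis $\dim V_S = k$ translates to $\codim (V^*)^T = k$.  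Part (a) applied to $g^*$ then gives $\ell_R(g^*) = \dim (V^*)_T = \codim V^S$, and since $g \mapsto g^*$ is an anti-isomorphism sending reflections bijectively to reflections, reflection length is preserved, yielding $\ell_R(g) = \codim V^S$.  Part (c) now follows immediately: $\ell_R(g) = k$ forces equality throughout both chains noted at the outset, giving $\codim V^S = \dim V_S = k$, while the converse direction is (a) or (b).  I expect the only delicate step to be verifying the upper-triangular structure of $M$ in part (a); everything else is bookkeeping with the preceding results.
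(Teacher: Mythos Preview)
Your argument is correct.  Part (a) is essentially the paper's proof in different packaging: the paper picks, for each $j$, a vector $x_j \in \bigcap_{i>j} H_i \smallsetminus H_j$ and then shows inductively that $v_1, v_2, \ldots \in \im(g-1)$; your dual-basis vectors $y_j$ (with $\alpha_i(y_j)=\delta_{ij}$) are a particular such choice, and your ``$M$ is upper-triangular with $1$'s on the diagonal'' is exactly the induction wrapped up as a single matrix statement.  Part (c) is likewise the same.  The genuine difference is in part (b): the paper dualizes the argument of (a) \emph{by hand}, choosing forms $\beta_j$ that vanish on $v_1,\ldots,v_{k-j}$ but not on $v_{k+1-j}$ and showing directly that $\ker(g-1)\subseteq V^S$, whereas you observe once and for all that $g\mapsto g^*$ is an anti-isomorphism $\GL(V)\to\GL(V^*)$ that swaps the roles of $V_S$ and $V^S$, and then simply invoke part (a) on the dual side.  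Your route is cleaner and makes the symmetry between (a) and (b) structural rather than computational; the paper's route has the minor advantage of staying inside a single vector space and not needing to verify that reflection length is preserved under the adjoint.
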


\begin{proof}
For each $i$, fix a pair $(v_i, \alpha_i) \in V \times V^*$ representing $s_i$ as in Proposition~\ref{Vectors}, so in particular $v_i$ belongs to $\im(s_i - 1)$, and let $H_i = V^{s_i} = \ker(\alpha_i)$ be the fixed hyperplane of $s_i$.

We begin with part (a).  Suppose that $k = \codim(V^S) = \codim\left(\bigcap_{i=1}^k H_i\right)$.  Since codimension is subadditive over intersections, we have the chain of strict inclusions 
\[
H_k \supsetneq H_{k - 1}\cap H_k \supsetneq \dots \supsetneq \bigcap_{i=2}^k H_i \supsetneq V^S.
\]
Thus, for each $j$ there is a vector $x_j$ in $\bigcap_{i={j + 1}}^k H_i$ with $x_j \notin H_j$.  For such a vector $x_1$ we have
\begin{align*}
(g-1)(x_1) &= s_1(x_1) - x_1 \\
&= \alpha_1(x_1) \cdot v_{1}
\end{align*}
with $\alpha_1(x_1) \neq 0$.  Thus, $v_1 \in \im(g-1)$.  Similarly, for a vector $x_2 \in \bigcap_{i=3}^k H_i$ with $x_2 \notin H_2$, we have 
\begin{align*}
(g-1)(x_2) &= s_1s_2(x_2)-x_2\\
&= s_1(x_2 + \alpha_{2}(x_2) \cdot v_2) - x_2\\
&= (\alpha_1(x_2)+\alpha_1(v_2)\alpha_2(x_2)) \cdot v_1  + \alpha_2(x_2) \cdot v_2
\end{align*}
with $\alpha_2(x_2)\neq0$.  Since $v_1$ belongs to the subspace $\im(g - 1)$, we have that $\alpha_2(x_2) \cdot v_2\in \im(g-1)$ and so also $v_2\in \im(g-1)$.  Continuing inductively in this way, we have that $v_i \in \im(g-1)$ for all $i$ and thus $V_S \subseteq \im(g-1)$.  Combining this with Proposition~\ref{prop:easy linear preliminary}, it follows that $V_S = \im(g-1)$.  By Theorem~\ref{thm:linear}(c), we have that $\dim(V_S)= \ell_R(g)$, as desired.

Next, we consider part (b); the argument is a dualization of the preceding paragraph.  Suppose that $k = \dim(V_S) = \dim(\spn\{v_1, \ldots, v_k\})$.  Thus, the $v_k$ are linearly independent, and so for each $j$ there is a linear form $\beta_j$ such that $\beta_j(v_1) = \dots = \beta_j(v_{k - j}) = 0$ and $\beta_j(v_{k + 1 - j}) \neq 0$.
Let $S' = (s_1, \ldots, s_{k - 1})$ be the prefix of $S$.  Choose any vector $x$ in $\ker(g - 1)$ and consider $\beta_1(g(x) - x)$.  On one hand, since $x \in \ker(g - 1)$ we have $g(x) - x = 0$ and so $\beta_1(g(x) - x) = 0$.  On the other hand, by a telescoping expansion we have
\[
\beta_1(g(x) - x) = 
\beta_1(s_1s_2\cdots s_k(x) - s_2\cdots s_k(x)) + 
\ldots + 
\beta_1(s_{k - 1}s_k(x) - s_k(x)) + 
\beta_1(s_k(x) - x).
\]
On the right side, the argument of the first application of $\beta_1$ is a multiple of $v_1$, the argument of the second application of $\beta_1$ is a multiple of $v_2$, and so on.  By the choice of $\beta_1$, this implies
\begin{align*}
0 = \beta_1(g(x) - x) & = \beta_1(s_k(x) - x) \\
& = \beta_1( \alpha_k(x) v_k) \\
& = \beta_1(v_k) \alpha_k(x).
\end{align*}
Since $\beta_1(v_k) \neq 0$, it follows that $\alpha_k(x) = 0$ and so that $x \in \ker(s_k - 1)$.  Then applying $\beta_2$ to the same telescoping expansion, using the fact that $s_k(x) - x = 0$, we likewise conclude that $\alpha_{k - 1}(x) = 0$ and so that $x \in \ker(s_{k - 1} - 1)$.  Continuing inductively in this way shows that $x$ belongs to $\ker(s_i - 1)$ for each $i$.  This implies that $\ker(g - 1) \subseteq \bigcap \ker(s_i - 1) = V^S$, and so $\ker(g - 1) = V^S$ by Proposition~\ref{prop:easy linear preliminary}.  Finally, we have by Theorem~\ref{thm:linear}(c) that $\ell_R(g) = \codim V^S$, as claimed.

To prove part (c), suppose first that $\codim\left(\bigcap_{i=1}^k H_i\right) = \dim(V_S)= k$. Then by either part (a) or part (b), $\ell_R(g) = k$.  Conversely, suppose that  $\ell_R(g) = k$.  Clearly $\bigcap_{i=1}^k H_i \subseteq V^g$ and thus $\codim\left(\bigcap_{i=1}^k H_i\right) \geq k$.  However, each $H_i$ is a hyperplane and thus the intersection $\bigcap_{i=1}^k H_i$ can have codimension at most $k$.  Thus $\codim(\bigcap_{i=1}^k H_i) = k$.  From part (a), it follows that $\dim(V_S) = \ell_R(g) = k$, as desired.
\end{proof}

\begin{exmp}
\label{example}
Let $\FF = \FF_5$ and $V = (\FF_5)^3$. We will give three examples of ordered reflection factorizations $S = (s_1, s_2, s_3)$ in $\GL(V)$.  In all three cases, the $S$-fixed space $V^S$ will be trivial (i.e., $\codim(V^S) = \codim(H_1\cap H_2\cap H_3) = 3$), but the dimension of the $S$-moved space $V_S$ will vary. In all examples, for $i = 1, 2, 3$ let $s_i$ be represented by the pair $(v_i, \alpha_i)$ as in Proposition~\ref{Vectors}, with 
\[ 
\alpha_1 = \bpm 1 & 0 &0\epm,
 \hskip 1cm 
\alpha_2 = \bpm 0&1&0\epm,
 \hskip 1cm 
 \alpha_3 = \bpm 0&0&1\epm,
\]
so that $H_1, H_2, H_3$ are the three coordinate hyperplanes.  Thus $V^S = H_1 \cap H_2 \cap H_3 = \{0\}$ has codimension $3$ in all cases.
\begin{enumerate}[(i)]

\item Set $v_1 = v_2 = v_3 = \bpm 1 & 1 & 1\epm^\top$.  The matrices for these reflections are
\[ 
s_1 = \bpm 2&0&0\\1&1&0 \\ 1&0&1\epm, 
\hskip .2cm 
s_2 = \bpm 1&1&0 \\ 0&2&0 \\ 0&1&1\epm, 
\hskip .2cm 
\textrm{and}
\hskip .2cm 
s_3 = \bpm 1&0&1 \\ 0&1&1 \\ 0&0&2 \epm,\]
and their product is
\[
g := s_1 \cdot s_2 \cdot s_3 = \bpm 2 & 2 & 4 \\ 1 & 3 & 4 \\ 1 & 2 & 0\epm.
\]
Then $\dim V_S = 1$.  The element $g$ is a reflection (it fixes the vectors $\bpm 1 & 0 & 1\epm^\top$ and $\bpm 1 & 2 & 0 \epm^\top$) and so $\ell_R(g) = 1$.

\item
Set $v_1 = v_2 = \bpm 1 & 1 & 1\epm^\top$, $v_3 = \bpm 1 & 0 & 1\epm^\top$.  The matrices for these reflections are 
\[
s_1 = \bpm 2&0&0\\1&1&0 \\ 1&0&1\epm, 
 \hskip .2cm 
s_2 = \bpm 1&1&0 \\ 0&2&0 \\ 0&1&1\epm,
 \hskip .2cm
 \textrm{and}
 \hskip .2cm
s_3 = \bpm 1&0&1\\0&1&0\\0&0&2\epm, 
\]
and their product is
\[
g:= s_1 \cdot s_2 \cdot s_3 = \bpm 2 & 2 & 2 \\ 1 & 3 & 1 \\ 1 & 2 & 3\epm.
\]
In this case $\dim V_S = 2$.  The element $g$ fixes the line $\FF \cdot \bpm 1 & 2 & 0 \epm^\top$, so it is not a reflection, but it can be written as a product of two reflections (for example, as $(s_1 \cdot s_2) \cdot s_3$, where one can check that the first factor $s_1 \cdot s_2$ is in fact a reflection) and so $\ell_R(g) = 2$.
 
\item 
Set $v_1 = \bpm 1&0&0\epm^\top$, $v_2 = \bpm 0&1&0\epm^\top$, and $v_3 = \bpm 0&0&1\epm^\top$.  The matrices for these reflections are
\[ 
s_1 = \bpm 2&0&0 \\ 0&1&0\\0&0&1\epm,
 \hskip .2cm 
s_2 = \bpm 1&0&0\\0&2&0\\0&0&1\epm, 
 \hskip .2cm
 \textrm{and}
 \hskip .2cm
s_3 = \bpm 1&0&0\\0&1&0\\0&0&2\epm,
\]
and their product is $2$ times the identity. In this case $\dim V_S = 3$.
\end{enumerate}
\end{exmp}


\section{The affine group}
\label{sec:affine group}

In the first three parts of this section, we restrict to the case that the underlying field $\FF$ is not $\FF_2$.  (Note that this is not a number-theoretic restriction: fields of \emph{characteristic} $2$ are allowed, as long as they have more than $2$ elements.)  The case of $\FF_2$ is discussed in Section~\ref{sec:F_2}.

\subsection{Definitions}

We begin this section by reviewing the geometry of affine spaces and the general affine group; \cite[Ch.~2]{Berger} is one possible reference.

An \emph{affine space} $\VV$ associated to a finite-dimensional vector space $V$ is a set together with a uniquely transitive $V$-action, that is, for every two points $x, y \in \VV$, there is a unique vector $\lambda$ in $V$ such that $x + \lambda = y$.  One may think of the affine space as retaining those parts of the linear structure of a vector space that do not require a fixed origin; in particular, one cannot take sums of elements of $\VV$, but one can take differences (displacements), and the difference between two elements of $\VV$ is a vector in $V$.  To prevent confusion, we refer to elements of $\VV$ as \emph{points} and elements of $V$ as \emph{vectors}.  One may induce a (non-canonical) vector space structure on $\VV$ by choosing a point of $\VV$ to call the origin; with this choice, the vector space structure is isomorphic to $V$.  A subset $X$ of $\VV$ is an \emph{(affine) subspace} if $X = a + U$ for some point $a \in \VV$ and some subspace $U \subseteq V$; in this case the choice of $U$ is unique.

An \emph{affine transformation} $f$ between two affine spaces $\VV_1, \VV_2$ over the same field is a map such that if $x_1, x_2, y_1, y_2$ are points in $\VV$ and $y_1 - x_1 = y_2 - x_2$ then $f(y_1) - f(x_1) = f(y_2) - f(x_2)$, and moreover the map $V_1 \to V_2$ sending $y_1 - x_1 \mapsto f(y_1) - f(x_1)$ is linear.  The \emph{general affine group} (or just \emph{affine group} for short) $\GA(\VV)$ of $\VV$ consists of all invertible affine transformations from $\VV$ to itself.  

Given a point $a$ in $\VV$, the subgroup of all affine transformations that fix $a$ is naturally isomorphic to the general linear group $\GL(V)$: the linear map $g$ corresponds to the affine transformation $\gg$ defined by $\gg(x) := a + g(x - a)$ for every $x \in \VV$; obviously $\gg(a) = a$ in this case.  We denote by $\iota_a$ this \emph{inclusion map} $\GL(V) \hookrightarrow \GA(\VV)$.  There is also a natural \emph{projection map} $\pi : \GA(\VV) \twoheadrightarrow \GL(V)$ sending an affine transformation $\gg$ to the linear map $g$ defined by $g(\lambda) := \gg(a + \lambda) - \gg(a)$ for every $\lambda \in V$ and an arbitrary (alternatively, every) $a \in \VV$.  The kernel of $\pi$ is exactly the set of \emph{translations} of $\VV$, the maps that, for some fixed $\lambda \in V$, send $x \mapsto x + \lambda$ for all $x \in \VV$.  The inclusion and projection maps show that the full affine group $\GA(\VV)$ is isomorphic to the semidirect product $V \rtimes \GL(V)$ of $\GL(V)$ and the group $V$ acting on $\VV$ by translation; every affine transformation can be written uniquely as a linear transformation (with respect to some prescribed origin) followed by a translation.

Let $\FF$ be the field of scalars of $\VV$ and let $n = \dim \VV$. By choosing coordinates for $\VV$, we may realize $\GA(\VV)$ as a set of $(n + 1) \times (n + 1)$ matrices over $\FF$:
\[
\GA(\VV) \cong \left\{ \begin{bmatrix} g & \lambda \\ 0 & 1 \end{bmatrix} \colon g \in \GL_n(\FF), \lambda \in \FF^n \right\} \subset \GL_{n + 1}(\FF).
\]
In this case, $\VV$ is identified with the affine hyperplane $\{(x_1, \ldots, x_n, 1) \}$ and $V$ is identified with the linear hyperplane $\{(x_1, \ldots, x_n, 0) \}$  in $\FF^{n + 1}$.  Given $\gg =  \begin{bmatrix} g & \lambda \\ 0 & 1 \end{bmatrix} \in \GA(\VV)$, the action of $\gg$ on a point $\aff{a} = (a_1, \ldots, a_n, 1) \in \VV$ is $\gg(\aff{a}) = \bpm g(a) + \lambda \\ 1 \epm$ where $a = (a_1, \ldots, a_n)$, i.e., the block $g$ is the matrix of the projection of $\gg$ into $\GL(V)$ and $\lambda$ is the associated translation vector.  

A \emph{reflection} in $\GA(\VV)$ is an element that fixes a subspace of codimension $1$ (a hyperplane) pointwise.  (As before, this is a larger class than the orthogonal reflections, but the reflections in this sense that belong to the isometries of $\RR^n$ are exactly the orthogonal reflections.)  We denote by $\affR$ the subset of reflections of $\GA(\VV)$.  The fact that $\affR$ generates $\GA(\VV)$, and so that it makes sense to speak of the reflection length $\ell_{\affR}$ and to call $\GA(\VV)$ a reflection group, is deferred to Proposition~\ref{prop:reflection facts}(d) below.

The natural inclusion and projection maps $\iota_a$ and $\pi$ send reflections to reflections: if $r$ is a reflection in $\GL(V)$ and $a$ is a point in $\VV$ then $\iota_a(r)$ is a reflection in $\GA(\VV)$, and likewise if $\rr$ is a reflection in $\GA(\VV)$ then $\pi(\rr)$ is a reflection in $\GL(V)$.  In what follows, we will continue to use $R$ to denote the set of reflections in $\GL(V)$.

\subsection{Fundamental subspaces, tripartite classification}

We define three fundamental subspaces of an element of $\GA(\VV)$.  
\begin{definition*}
For an element $\gg \in \GA(\VV)$, its \emph{moved space} is the set 
\[
\mov(\gg) := \{\gg(x) - x \colon x \in \VV\},
\]
its \emph{affine fixed space} is the set 
\[
\fixaff(\gg) := \{x \in \VV \colon \gg(x) = x \},
\]
and its \emph{linear fixed space} is the set 
\[
\fixlin(\gg) := \{ v \in V \colon g(v) = v\} = \ker\left( g - 1 \right),
\]
where $g = \pi(\gg)$ is the projection of $\gg$ into $\GL(V)$.
\end{definition*}
The affine fixed space $\fixaff(\gg)$ is a subspace of the affine space $\VV$, the linear fixed space $\fixlin(\gg)$ is a (linear) subspace of the vector space $V$, and the moved space $\mov(\gg)$ is an affine subspace of the vector space $V$.

\begin{example}
Let $\VV = \FF_3^2$ be a (coordinatized) two-dimensional affine space over the field $\FF_3$ having three elements, with coordinates $(x, y)$.  We consider three elements $r, s, t$ of $\GA(\VV)$, defined as follows:
\begin{compactitem}
\item $r$ is the reflection defined by $r (x, y) = (x, -y)$;
\item $t$ is the translation defined by $t (x, y) = (x + 1, y)$; and
\item $s$ is the map defined by $s  (x, y) = (y + 1, x)$.
\end{compactitem}
As matrices in $\GL_3(\FF_3)$, we have
\[
r = \begin{bmatrix} 
1 & 0 & 0 \\
0 &-1 & 0 \\
0 & 0 & 1
\end{bmatrix},
\qquad
t = \begin{bmatrix} 
1 & 0 & 1 \\
0 & 1 & 0 \\
0 & 0 & 1
\end{bmatrix},
\qquad \textrm{ and } \qquad
s = \begin{bmatrix} 
0 & 1 & 1 \\
1 & 0 & 0 \\
0 & 0 & 1
\end{bmatrix}.
\]
Then 
\begin{compactitem}
\item $\fixaff(r)$ is the line $\{(x, 0)\}$ in $\VV$, $\fixlin(r)$ is the line $\{(x, 0)\}$ in $V$, and $\mov(r)$ is the (linear) line $\{(0, y)\}$ in $V$; 
\item $\fixaff(t)$ is the empty set, $\fixlin(t) = V$, and $\mov(t)$ is the point $\{(1, 0)\}$ in $V$; and
\item $\fixaff(s)$ is the empty set, $\fixlin(s)$ is the line $\{(x, x)\}$ in $V$, and $\mov(s)$ is the affine line $(1, 0) + \{(x, -x)\}$ in $V$.
\end{compactitem}
The three elements have reflection lengths $\ell_{\affR}(r) = 1 = \dim \mov(r)$, $\ell_{\affR}(s) = 2 = 1 + \dim\mov(s)$, and $\ell_{\affR}(t) = 2 = 2 + \dim\mov(t)$.
\end{example}

This example inspires the following three-part classification of elements of $\GA(\VV)$.  (The names are a second-hand borrowing from the theory of metric spaces of non-positive curvature, via \cite{BradyMcCammond} -- see \cite[p.~230]{BridsonHaefliger} for an explanation of the terminology in its original context.)
\begin{definition}
\label{def:tripartite}
When $|\FF| > 2$, we say that an element $\gg$ of $\GA(\VV)$ is
\begin{compactitem}
\item \emph{elliptic}, if $\gg$ fixes any point of $\VV$ (equivalently, if $\fixaff(\gg)$ is nonempty);
\item \emph{hyperbolic}, if $\gg$ is a nontrivial translation (equivalently, if $\mov(\gg) = \{\lambda\}$ for $\lambda \neq 0$, or if $\fixaff(\gg) = \varnothing$ and $\fixlin(\gg) = V$); and
\item \emph{parabolic}, otherwise.
\end{compactitem}
\end{definition}

\subsection{The main theorem}

Our second main theorem is that the classification of Definition~\ref{def:tripartite} explains the relationship between the reflection length and dimension of the moved space for elements of $\GA(\VV)$.

\begin{theorem}
\label{main affine theorem}
For an element $\gg \in \GA(\VV)$, we have
\[
\ell_{\affR}(\gg) = 
\begin{cases}
\dim\mov(\gg) & \textrm{ if } \gg \textrm{ is elliptic,} \\
\dim\mov(\gg) + 1 & \textrm{ if } \gg \textrm{ is parabolic, and } \\
\dim\mov(\gg) + 2 & \textrm{ if } \gg \textrm{ is hyperbolic.} \\
\end{cases}
\]
\end{theorem}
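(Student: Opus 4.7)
The plan is to establish matching lower and upper bounds on $\ell_{\affR}(\gg)$ separately in the three cases. Fix an origin in $\VV$, so every affine transformation is of the form $\gg(x) = g(x) + \lambda$ with $g = \pi(\gg) \in \GL(V)$ and $\lambda \in V$. The structural preliminary I would prove first is that $\pi$ sends affine reflections to linear reflections: an affine hyperplane fixed pointwise by $\rr$ has a direction fixed pointwise by $\pi(\rr)$, and $\pi(\rr) = 1$ is impossible since a nontrivial translation has no fixed points. Writing $\rr(x) = s(x) + \mu$ with $s = \pi(\rr)$, the existence of a fixed point of $\rr$ is equivalent to $\mu \in \im(s - 1)$. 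A short induction then gives the product formula
\[
(\rr_1 \cdots \rr_k)(x) = (s_1 \cdots s_k)(x) + \textstyle\sum_{i=1}^{k} (s_1 \cdots s_{i-1})\,\mu_i,
\]
where $s_i = \pi(\rr_i)$ and $\mu_i$ is the translation part of $\rr_i$. This formula underlies every calculation below.

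For the lower bounds, projecting to $\GL(V)$ and applying Theorem~\ref{thm:linear}(c) yields $\ell_{\affR}(\gg) \geq \ell_R(g) = \dim \mov(\gg)$. To tighten this when $\gg$ has no fixed point, suppose for contradiction $\gg = \rr_1 \cdots \rr_k$ with $k = \ell_R(g)$. Theorem~\ref{thm:elise}(c) forces the linear forms $\alpha_i$ cutting out the fixed hyperplanes of the $s_i$ to be independent. But then the affine hyperplanes $\fixaff(\rr_i) = \{x : \alpha_i(x) = c_i\}$ for suitable scalars $c_i$ have a nonempty common solution (the $\alpha_i$ being independent makes the system consistent), producing a fixed point of $\gg$, a contradiction. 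Hence $k \geq \dim \mov(\gg) + 1$ in the parabolic and hyperbolic cases. In the hyperbolic case, $\pi(\gg) = 1$ additionally rules out length-$1$ factorizations (since $\pi(\rr) \neq 1$ for any reflection), raising the bound to $k \geq 2 = \dim \mov(\gg) + 2$.

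The elliptic and hyperbolic upper bounds are straightforward constructions. For elliptic $\gg$, pick a fixed point $a$ and a reduced $\GL$-factorization $g = s_1 \cdots s_m$; the lifts $\iota_a(s_i)$ are affine reflections whose product is $\gg$. For hyperbolic $\gg = \tau_\lambda$, choose an order-$2$ linear reflection $s$ with moved space $\FF \lambda$ (possible because $|\FF| > 2$) together with two affine reflections having linear part $s$ and parallel fixed hyperplanes $a_1 + \fixlin(s)$, $a_2 + \fixlin(s)$ with $(1 - s)(a_2 - a_1) = \lambda$; the product formula confirms their product is $\tau_\lambda$.

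The main obstacle is the parabolic upper bound, which requires a careful construction. Here $g \neq 1$, $\lambda \notin \im(g - 1)$, and $m := \dim \mov(\gg) \geq 1$. I would proceed as follows: choose a linear form $\alpha \in V^*$ vanishing on $V^g = \ker(g - 1)$ and with $\alpha(\lambda) \neq -1$; this is possible because the annihilator of $V^g$ in $V^*$ has dimension $m \geq 1$, and the constraint $\alpha(\lambda) \neq -1$ excludes at most one affine hyperplane of this space, leaving valid choices when $|\FF| > 2$. Define $s_1(x) := x + \alpha(x)\lambda$, so $s_1$ is a reflection with $\im(s_1 - 1) = \FF \lambda$. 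A direct computation using $\lambda \notin \im(g - 1)$ shows $\ker(g - s_1) = V^g$, whence $\dim \im(g - s_1) = m$ and so $\ell_R(s_1^{-1} g) = m$ by Theorem~\ref{thm:linear}(c). Fix any reduced factorization $s_1^{-1} g = s_2 \cdots s_{m+1}$ in $\GL(V)$, let $\rr_1$ be the affine reflection with linear part $s_1$ and translation part $\lambda$ (valid because $\lambda \in \im(s_1 - 1)$), and set $\rr_i := \iota_0(s_i)$ for $i \geq 2$. The product formula then yields $\rr_1 \cdots \rr_{m+1} = \gg$, giving $\ell_{\affR}(\gg) \leq m + 1$ and completing the proof.
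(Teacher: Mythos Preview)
Your argument follows essentially the same architecture as the paper's: the lower bound for non-elliptic elements via independence of the $\alpha_i$ forcing a common affine fixed point is exactly Proposition~\ref{prop:elliptic converse}, and your parabolic construction is a coordinatized version of Proposition~\ref{prop:if parabolic then right length} (your $\rr_1$ has $\fixlin(\rr_1)=\ker\alpha\supseteq V^g$ and sends $\gg(0)=\lambda$ to $0$, which is precisely the paper's reflection $\rr$ chosen so that $\fixlin(\rr)\supseteq\fixlin(\gg)$ and $\rr(\gg(a))=a$).

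There is, however, a small but genuine gap in the hyperbolic step. You assert that an order-$2$ linear reflection $s$ with $\im(s-1)=\FF\lambda$ exists ``because $|\FF|>2$.'' In characteristic~$2$ this fails: an order-$2$ reflection must be a transvection with $\alpha(v)=0$, which is impossible when $\dim V=1$ (and fields such as $\FF_4$ satisfy $|\FF|>2$). The fix is easy---drop the order-$2$ requirement and take $\rr_2$ with linear part $s_1^{-1}$ rather than $s_1$---or simply follow the paper's Proposition~\ref{prop:reflection length is not too long}, which gets $\ell_{\affR}(\gg)\le 2+\dim\mov(\gg)$ for \emph{every} $\gg$ by premultiplying by a single reflection to reach an elliptic element. (There is also a harmless sign slip: the product of your two parallel reflections is translation by $(1-s)(a_1-a_2)$, not $(1-s)(a_2-a_1)$.)
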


The rest of this section is devoted to the proof of this result.
We begin with some basic technical facts about how the fixed and moved spaces of elements of $\GA(\VV)$ relate to those of their images in $\GL(V)$.

\begin{prop}
\label{prop:elliptic facts}
Suppose that $\gg$ is an element of $\GA(\VV)$ and that $g$ is its image under the projection map $\pi: \GA(\VV) \twoheadrightarrow \GL(V)$.  The fundamental subspaces of $\gg$ and $g$ are related in the following way:
\begin{compactenum}[(a)]
\item $\mov(\gg) = \im(g - 1) + (\gg(a) - a)$ for any point $a \in \VV$, and
\item if $\gg$ is elliptic then $\fixaff(\gg) = a + \ker(g - 1) = a + \fixlin(\gg)$ for any point $a$ in $\fixaff(\gg)$.
\end{compactenum}
\end{prop}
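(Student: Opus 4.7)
The plan is to derive both parts from the single identity $\gg(a + \lambda) = \gg(a) + g(\lambda)$ for $a \in \VV$ and $\lambda \in V$, which is just the statement that $g = \pi(\gg)$ is the linear part of $\gg$ (and which is independent of the choice of origin used to define $\pi$). Once this identity is in hand, both claims reduce to routine algebraic manipulation.

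For part (a), I would fix an arbitrary point $a \in \VV$ and parametrize $\VV$ by $x = a + \lambda$ as $\lambda$ ranges over $V$. Then
\[
\gg(x) - x = \bigl(\gg(a) + g(\lambda)\bigr) - (a + \lambda) = (\gg(a) - a) + (g - 1)(\lambda).
\]
As $\lambda$ ranges over all of $V$, the second summand ranges over all of $\im(g - 1)$, and the first is a constant translation, giving $\mov(\gg) = (\gg(a) - a) + \im(g - 1)$ as claimed. This also transparently shows that $\mov(\gg)$ is an affine subspace of $V$ whose direction is $\im(g - 1)$, independent of the choice of $a$.

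For part (b), I would pick $a \in \fixaff(\gg)$ (which is nonempty by hypothesis), so that $\gg(a) = a$, and parametrize $\VV$ again by $x = a + \lambda$. Then
\[
\gg(a + \lambda) = \gg(a) + g(\lambda) = a + g(\lambda),
\]
so $a + \lambda \in \fixaff(\gg)$ if and only if $g(\lambda) = \lambda$, that is, $\lambda \in \ker(g - 1) = \fixlin(\gg)$. This gives $\fixaff(\gg) = a + \fixlin(\gg)$.

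There is no serious obstacle here; the only thing to be careful about is that the identity $\gg(a + \lambda) = \gg(a) + g(\lambda)$ really is the defining property of $g = \pi(\gg)$ as reviewed earlier in the section, so it is legitimate to invoke it for any $a \in \VV$. Everything else is bookkeeping with the $V$-action on $\VV$.
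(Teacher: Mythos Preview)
Your proof is correct and essentially identical to the paper's own argument: both parts are derived from the identity $\gg(a+\lambda)=\gg(a)+g(\lambda)$, with the same parametrization $x=a+\lambda$ and the same one-line computations for each part. The only differences are notational (you write $\lambda$ where the paper writes $v$, and $x$ where the paper writes $b$).
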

\begin{proof}
For any point $a$ in $\VV$ and vector $v$ in $V$ we have $\gg(a + v) - (a + v) = \gg(a) - a + g(v) - v$.  Fix $a$ and let $v$ vary; then $a + v$ varies over all of $\VV$, so the set of left sides of the previous equation is exactly $\mov(\gg)$.  On the other hand, as $v$ varies over $V$, the set of right sides is exactly $\gg(a) - a + \im(g - 1)$.  This proves part (a).  For part (b), suppose that $\gg$ is elliptic.  Fix a point $a$ in $\fixaff(\gg)$.  For any point $b$ in $\VV$, we have $\gg(b) = \gg(a) + g(b - a) = a + g(b - a)$, so $b$ belongs to $\fixaff(\gg)$ if and only if $b - a \in \ker(g - 1) = \fixlin(\gg)$, as claimed.
\end{proof}

\begin{remark}
\label{rmk:elliptic moved space}
Part (a) is particularly nice when $\gg$ is elliptic: choosing $a$ to be a fixed point of $\gg$, it says that in this case $\mov(\gg) = \im(g - 1)$.
\end{remark}

Next, we collect some basic facts about reflections in $\GA(\VV)$.
\begin{prop}
\label{prop:reflection facts}
For every reflection $\rr$ in $\GA(\VV)$, 
\begin{enumerate}[\quad(a)]
\item there exists a point $a \in \VV$, a vector $v \in V$, and a linear form $\alpha \in V^*$ such that 
\[
\rr(x) = x + \alpha(x - a) \cdot v
\]
for all $x$ in $\VV$, and
\item $\mov(\rr)$ is a (linear) line in $V$.
\end{enumerate}
In addition,
\begin{enumerate}[\quad(a)]
\addtocounter{enumi}{2}
\item if $\aff{H}$ is a hyperplane in $\VV$ and $a$, $b$ are points in $\VV \smallsetminus \aff{H}$ then there is a unique reflection $\aff{r}$ in $\GA(\VV)$ such that $\fixaff(\aff{r}) = \aff{H}$ and $\aff{r}(a) = b$, and
\item the group $\GA(\VV)$ is generated by its subset of reflections.
\end{enumerate}
\end{prop}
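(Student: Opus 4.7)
Parts (a) and (b) reduce to Proposition~\ref{Vectors} applied to the linear projection $\pi(\rr)$, part (c) combines the classification from (a) with an explicit construction, and part (d) is the main obstacle; for it one reduces to the case of translations, which are then handled with a direct two-reflection construction that uses $|\FF| > 2$.

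For (a), fix a point $a$ in the fixed hyperplane of $\rr$. The linear projection $r := \pi(\rr)$ fixes the codimension-one linear subspace $\fixaff(\rr) - a$ pointwise and cannot be the identity (otherwise $\rr$ would be a translation with a fixed point, hence trivial), so Proposition~\ref{Vectors} yields $v \in V$ and $\alpha \in V^*$ with $r(y) = y + \alpha(y) v$; then $\rr(x) = \rr(a) + r(x - a) = x + \alpha(x - a) v$, as claimed. Part (b) is immediate from this formula: as $x$ varies over $\VV$ the scalar $\alpha(x - a)$ ranges over all of $\FF$, so $\mov(\rr) = \FF v$.

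For (c), uniqueness comes from (a): fixing any $a_0 \in \aff{H}$ pins down $\alpha$ up to a scalar via $\ker \alpha = \aff{H} - a_0$, and the requirement $\rr(a) = b$ then pins down the scaling of $v$. For existence, set $v := (b - a)/\alpha(a - a_0)$ (well-defined since $a \notin \aff{H}$) and $\rr(x) := x + \alpha(x - a_0) v$; the identity $\alpha(v) = \alpha(b - a_0)/\alpha(a - a_0) - 1$ differs from $-1$ precisely because $b \notin \aff{H}$, so by Proposition~\ref{Vectors} the linear part of $\rr$ is in $\GL(V)$, and $\rr$ is an affine reflection with the prescribed properties.

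For (d), any $\gg \in \GA(\VV)$ factors as $\iota_a(g) \cdot t_\lambda$ with $g = \pi(\gg)$ and $t_\lambda$ some translation, and $\iota_a(g)$ is a product of reflections by Theorem~\ref{thm:linear}(c) combined with the already-noted fact that $\iota_a$ sends reflections to reflections. The remaining and hardest problem is to write a nontrivial translation $t_\lambda$ as a product of reflections. The plan is to choose a nonzero linear form $\alpha \in V^*$ with $\alpha(\lambda) \neq -1$, which exists whenever $|\FF| > 2$ (the delicate case being $\dim V = 1$, where one needs a third field element); this makes $r_1(y) := y + \alpha(y) \lambda$ a reflection in $\GL(V)$ with $v_{r_1} = \lambda$. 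Fix any $a \in \VV$ and set $\rr_1 := \iota_a(r_1)$ and $\rr_2 := t_\lambda \rr_1^{-1}$. Then $\pi(\rr_2) = r_1^{-1}$ is still a reflection in $\GL(V)$, and a short computation (using $\rr_1^{-1}(a) = a$ together with Proposition~\ref{prop:elliptic facts}(a)) gives $\mov(\rr_2) = \FF \lambda$; since this contains $0$, the transformation $\rr_2$ has an affine fixed point, and because the linear part of $\rr_2$ is a nontrivial linear reflection this forces $\fixaff(\rr_2)$ to be a hyperplane through that fixed point, so $\rr_2$ is an affine reflection and $t_\lambda = \rr_2 \rr_1$ is the desired product.
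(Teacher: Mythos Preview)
Your argument is correct throughout; parts (a)--(c) coincide with the paper's proof essentially verbatim.  For part (d) you take a genuinely different route.  The paper applies the freshly proved part (c) directly: given $\gg$ and a point $a$, it finds a single reflection $\rr$ with $\rr(a)=\gg(a)$ (choosing a hyperplane $\aff H$ avoiding both $a$ and $\gg(a)$), so that $\rr^{-1}\gg$ is elliptic and can be factored via $\iota_a$ and Theorem~\ref{thm:linear}(c).  You instead split $\gg=\iota_a(g)\cdot t_\lambda$, handle the elliptic factor the same way, and then give an explicit two-reflection factorization of an arbitrary translation.  The paper's route is a little slicker---it leverages (c) immediately and never needs to look at translations as a special case---whereas your route makes the structure of translations explicit (foreshadowing Proposition~\ref{prop:parabolic converse}) at the cost of a short extra computation.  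The dependence on $|\FF|>2$ surfaces in the same place in both arguments: for the paper it is the existence of a hyperplane missing two prescribed points, for you it is the existence of a nonzero $\alpha$ with $\alpha(\lambda)\neq -1$; in both cases the obstruction only bites when $\dim V=1$.
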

\begin{proof}
Let $\rr$ be a reflection in $\GA(\VV)$, and let $r = \pi(\rr)$ be the projection of $\rr$ into $\GL(V)$.  By Proposition~\ref{Vectors}, there is a linear form $\alpha$ and a vector $v$ such that $r(\lambda) = \lambda + \alpha(\lambda)\cdot v$ for every vector $\lambda \in V$.  Choose a point $a \in \fixaff(\rr)$.  For any point $x$ in $\VV$, we have by definition of $\pi$ that $\rr(x) = \rr(a) + r(x - a) = a + r(x - a)$.  Thus $\rr(x) = a + (x - a) + \alpha(x - a) \cdot v = x + \alpha(x - a) \cdot v$, which is part (a).  Part (b) follows immediately either from part (a) or from Remark~\ref{rmk:elliptic moved space}.

For part (c), let $\aff{H}$, $a$, $b$ be given, and fix a point $c \in \aff{H}$.  Let $H = \{x - y : x, y \in \aff{H}\}$ be the hyperplane in $V$ parallel to $\aff{H}$.  Since the point $a$ does not belong to $\aff{H}$, we have that the vector $a - c$ does not belong to $H$.  Then let $\alpha$ be the unique linear form on $V$ such that $\alpha|_H = 0$ and $\alpha(c - a) = 1$.  By construction, $\alpha(a - b) = \alpha(c - b) - \alpha(c - a) = \alpha(c - b) - 1 \neq -1$, and so by the final claim of Proposition~\ref{Vectors} we have that the map $\lambda \mapsto \lambda + \alpha(\lambda) \cdot (a - b)$ is a reflection in $\GL(V)$.  Using the inclusion map $\iota_c$, this map lifts to the reflection $\rr(x) := x + \alpha(x - c) \cdot (a - b)$ in $\GA(\VV)$.  By construction, $\rr$ fixes $\aff{H}$ and $\rr(a) = b$, as desired.  For uniqueness, it is enough to observe that the resulting reflection $\rr$ does not depend on the choice of a point $c$ nor on the choice of a particular nonzero value for $\alpha(c - a)$ (in the latter case, because the vector $v:= a - b$ will rescale to compensate).

Finally, for part (d), consider an arbitrary element $\gg$ of $\GA(\VV)$.  Choose a point $a \in \VV$ and a hyperplane $\aff{H}$ in $\VV$ that does not include either $a$ or $\gg(a)$.  If $\gg(a) \neq a$, define $\rr$ to be the reflection (guaranteed by part (c)) sending $a \mapsto \gg(a)$ and fixing $\aff{H}$; otherwise, if $\gg(a) = a$, let $\rr$ be the identity.  In either case, $\rr$ is a product of ($1$ or $0$) reflections, and $\gg' := \rr^{-1} \cdot \gg$ is an elliptic element fixing $a$.  Let $g' = \pi(\gg')$ be the linear part of $\gg'$.  Since $g'$ belongs to $\GL(V)$, it can be written as a product of reflections in $R$.  Apply the inclusion map $\iota_a$ to this factorization; it produces an $\affR$-factorization of $\iota_a(g') = \gg'$.  It follows immediately that $\gg = \rr \cdot \gg'$ can be written as a product of reflections, as desired.
\end{proof}

The next result is a final technical lemma, whose second half is a first step towards the proof of the main result.

\begin{prop}
\label{boring technical lemma}
\begin{enumerate}[\quad(a)]
\item For any $\gg_1, \gg_2$ in $\GA(\VV)$, we have $\mov(\gg_1 \cdot \gg_2) \subseteq \mov(\gg_1) + \mov(\gg_2)$.
\item For any $\gg$ in $\GA(\VV)$, we have $\ell_{\affR}(\gg) \geq \dim \mov(\gg)$.
\end{enumerate}
\end{prop}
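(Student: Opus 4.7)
The plan is to prove the two parts sequentially, with part (a) doing most of the work for part (b). For part (a), I would fix an arbitrary point $x \in \VV$ and expand the displacement as a telescoping sum
\[
(\gg_1 \cdot \gg_2)(x) - x = \bigl(\gg_1(\gg_2(x)) - \gg_2(x)\bigr) + \bigl(\gg_2(x) - x\bigr),
\]
then observe that the first summand is the displacement of the point $\gg_2(x)$ under $\gg_1$, so lies in $\mov(\gg_1)$, while the second is the displacement of $x$ under $\gg_2$, so lies in $\mov(\gg_2)$. Since every element of $\mov(\gg_1 \cdot \gg_2)$ arises in this form, the Minkowski-sum containment follows directly.

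For part (b), my approach is to apply part (a) inductively to a shortest reflection factorization. By Proposition~\ref{prop:reflection facts}(d) the group $\GA(\VV)$ is generated by $\affR$, so we may write $\gg = \rr_1 \cdots \rr_k$ with $k = \ell_{\affR}(\gg)$. Iterating part (a) $k-1$ times gives
\[
\mov(\gg) \;\subseteq\; \mov(\rr_1) + \mov(\rr_2) + \cdots + \mov(\rr_k).
\]
By Proposition~\ref{prop:reflection facts}(b), each $\mov(\rr_i)$ is a (linear) line in $V$, so the right-hand side is a linear subspace of $V$ of dimension at most $k$, and consequently $\dim \mov(\gg) \leq k = \ell_{\affR}(\gg)$.

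There is no real obstacle here, but one subtle point merits care: in general $\mov(\gg)$ is only an affine subspace of $V$, not a linear one, so the final dimension bound rests on the elementary observation that an affine subspace contained in a linear subspace $W$ has dimension at most $\dim W$. This comparison is legitimate only because each $\mov(\rr_i)$ genuinely passes through the origin (a feature special to reflections, by Proposition~\ref{prop:reflection facts}(b)), so that the Minkowski sum on the right is a linear subspace rather than a general affine one.
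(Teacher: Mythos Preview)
Your proof is correct and follows essentially the same approach as the paper: the same telescoping identity for part (a), and for part (b) the same induction on a minimal factorization combined with Proposition~\ref{prop:reflection facts}(b). Your explicit remark about the affine-versus-linear subtlety is a nice clarification that the paper leaves implicit.
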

\begin{proof}
Every element of $\mov(\gg_1 \cdot \gg_2)$ is of the form $\gg_1(\gg_2(a)) - a$ for some point $a$ in $\VV$.  For every point $a$ in $\VV$, we have
\[
\gg_1(\gg_2(a)) - a = 
\left(\gg_1(\gg_2(a)) - \gg_2(a)\right)
+ \left(\gg_2(a) - a\right)
\in
\mov(\gg_1) + \mov(\gg_2),
\]
and part (a) follows immediately.

For part (b), choose a minimal reflection factorization $\gg = \aff{r_1} \cdots \aff{r_k}$ of $\gg$.  By applying part (a) repeatedly,
$\mov(\gg) \subseteq \mov(\aff{r}_1) + \ldots + \mov(\aff{r}_k)$.  By Proposition~\ref{prop:reflection facts}(b), the right side is of dimension at most $k = \ell_{\affR}(\gg)$, so taking dimensions gives the result.
\end{proof}

The next five results collectively establish the second main theorem for fields of size larger than $2$.

\begin{prop}
\label{prop:if elliptic then right length}
If $\gg$ in $\GA(\VV)$ is elliptic then $\ell_{\affR}(\gg) = \dim\mov(\gg)$.
\end{prop}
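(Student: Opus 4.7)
The plan is to establish the equality by proving the two inequalities separately. The lower bound $\ell_{\affR}(\gg) \geq \dim \mov(\gg)$ is already available for free from Proposition~\ref{boring technical lemma}(b), so the real content is the matching upper bound.

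For the upper bound, I would exploit the fact that an elliptic element has a fixed point, which allows us to transfer the problem from $\GA(\VV)$ back to $\GL(V)$ via the inclusion map $\iota_a$. Concretely: choose a point $a \in \fixaff(\gg)$, which exists by the definition of elliptic. Then $\gg = \iota_a(g)$, where $g = \pi(\gg)$ is the linear part of $\gg$. By Theorem~\ref{thm:linear}(c) applied to $\GL(V)$, there is a reflection factorization $g = r_1 \cdots r_k$ in $\GL(V)$ with $k = \dim \im(g - 1)$. Apply the inclusion $\iota_a$ to this factorization: since $\iota_a$ is a group homomorphism that sends reflections in $R$ to reflections in $\affR$ (as noted in the paragraph preceding the subsection on fundamental subspaces), we obtain
\[
\gg = \iota_a(g) = \iota_a(r_1) \cdots \iota_a(r_k),
\]
a reflection factorization of $\gg$ in $\GA(\VV)$ of length $k$. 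Hence $\ell_{\affR}(\gg) \leq k = \dim \im(g-1)$.

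To conclude, I would invoke Remark~\ref{rmk:elliptic moved space}, which says that for an elliptic element we have $\mov(\gg) = \im(g - 1)$, so $\dim \im(g - 1) = \dim \mov(\gg)$. This shows $\ell_{\affR}(\gg) \leq \dim \mov(\gg)$, matching the lower bound from Proposition~\ref{boring technical lemma}(b) and yielding equality.

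There is essentially no obstacle here: the proof is a clean reduction to the linear case, made possible entirely by the existence of a fixed point. The elliptic hypothesis is used in two essential ways, which happen to coincide: it guarantees that the inclusion map $\iota_a$ is defined at a fixed point of $\gg$ (so $\gg$ lies in the image of $\iota_a$), and it collapses the affine moved space to the linear image $\im(g - 1)$ via Proposition~\ref{prop:elliptic facts}(a). The harder cases (parabolic and hyperbolic), which presumably occupy the next several results, are precisely the ones where no fixed point is available and the translation part of $\gg$ must be accounted for separately, contributing the extra $+1$ or $+2$ to the reflection length.
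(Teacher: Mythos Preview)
Your proof is correct and follows essentially the same approach as the paper: both reduce to the linear case by using the inclusion $\iota_a$ at a fixed point $a$, lifting a shortest $R$-factorization of $g = \pi(\gg)$ to an $\affR$-factorization of $\gg$, and identifying $\dim\im(g-1)$ with $\dim\mov(\gg)$ via Proposition~\ref{prop:elliptic facts}(a). The only minor difference is in the lower bound: you invoke Proposition~\ref{boring technical lemma}(b) directly, whereas the paper instead shows $\ell_{R_a}(\gg) \leq \ell_{\affR}(\gg)$ by projecting a shortest $\affR$-factorization to $\GL(V)$ and lifting it back through $\iota_a$, thereby proving the stronger intermediate equality $\ell_{\affR}(\gg) = \ell_{R_a}(\gg)$.
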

\begin{proof}
Suppose that $\gg$ is elliptic; say that point $a \in \VV$ satisfies $\gg(a) = a$.  Consider the subgroup $G_a \cong \GL(V)$ of $\GA(\VV)$ that fixes $a$, and let $R_a = \affR \cap G_a$ be the subset of reflections in $G_a$.  Since every $R_a$-factorization of $\gg$ is also an $\affR$-factorization, we have $\ell_{\affR}(\gg) \leq \ell_{R_a}(\gg)$.  On the other hand, choose a shortest $\affR$-factorization 
\[
\gg = \aff{r_1} \cdots \aff{r_k}
\]
of $\gg$.  Projecting both sides into $\GL(V)$ gives an associated $R$-factorization
\[
g = r_1 \cdots r_k.
\]
But under the inclusion $\iota_a: \GL(V) \overset{\sim}{\longrightarrow} G_a \subset \GA(\VV)$, $g$ is sent to $\gg$ and each of the $r_i$ is sent to a reflection in $R_a$.  Thus, the factorization of $g$ as a product of reflections in $\GL(V)$ lifts to an $R_a$-factorization of $\gg$ of length $k$.  It follows that $\ell_{R_a}(\gg) \leq k = \ell_{\affR}(\gg)$.  Putting these two inequalities together and using the isomorphism $G_a \cong \GL(V)$, Theorem~\ref{thm:linear}(c), and Proposition~\ref{prop:elliptic facts}(a), we have
\[
\ell_{\affR}(\gg) = \ell_{R_a}(\gg) = \ell_{R}(g) = \dim \im(g - 1) = \dim \mov(\gg),
\]
as claimed.
\end{proof}

\begin{prop}
\label{prop:elliptic converse}
If $\ell_{\affR}(\gg) = \dim\mov(\gg)$ then $\gg$ is elliptic.
\end{prop}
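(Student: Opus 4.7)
The plan is to argue by contrapositive: assuming $\gg$ is non-elliptic, I will show that $\ell_{\affR}(\gg)$ strictly exceeds $\dim\mov(\gg)$. Combined with Proposition~\ref{boring technical lemma}(b), which already gives $\ell_{\affR}(\gg) \geq \dim\mov(\gg)$, this means equality of these two quantities forces $\gg$ to be elliptic.

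The key observation powering the argument is the equivalence: $\gg$ is elliptic if and only if the affine subspace $\mov(\gg) \subseteq V$ contains the origin. One direction is immediate from the definition (if $\gg(x) = x$ then $0 = \gg(x) - x \in \mov(\gg)$), and the converse is just as direct. On the other hand, by Proposition~\ref{prop:reflection facts}(b) every reflection $\aff{r}$ in $\GA(\VV)$ has $\mov(\aff{r})$ a \emph{linear} line in $V$; in particular each such moved space contains $0$.

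Given these two facts, the argument is short. Suppose for contradiction that $\gg$ is non-elliptic and admits a reflection factorization $\gg = \aff{r}_1 \cdots \aff{r}_k$ of length $k = \dim\mov(\gg)$. Iterating Proposition~\ref{boring technical lemma}(a) gives
\[
\mov(\gg) \subseteq \mov(\aff{r}_1) + \mov(\aff{r}_2) + \cdots + \mov(\aff{r}_k),
\]
and by the remark above the right-hand side is a sum of $k$ linear lines, hence a linear subspace $U$ of $V$ of dimension at most $k$. Since $\mov(\gg)$ is an affine subspace of dimension $k$ sitting inside the linear subspace $U$ of dimension at most $k$, a dimension count forces $\mov(\gg) = U$ (any affine $k$-subspace contained in a $k$-dimensional linear subspace fills it). But then $\mov(\gg)$ is linear and contains $0$, so $\gg$ is elliptic, contradicting our assumption.

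There is no substantial obstacle here; the only content is the structural observation that reflection moved spaces in $\GA(\VV)$ automatically pass through the origin, while non-elliptic elements have moved spaces that miss the origin, so the Minkowski sum in Proposition~\ref{boring technical lemma}(a) cannot even contain a non-elliptic moved space unless it has strictly larger dimension.
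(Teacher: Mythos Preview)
Your proof is correct, and it takes a genuinely different route from the paper's.

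The paper argues directly: given a minimal factorization $\gg = \aff{r}_1 \cdots \aff{r}_k$ with $k = \dim\mov(\gg)$, it projects to $\GL(V)$, uses Proposition~\ref{prop:elliptic facts}(a) to conclude that the projected factorization $g = r_1 \cdots r_k$ is also minimal, then invokes Theorem~\ref{thm:elise}(c) to deduce that the fixed-hyperplane forms $\alpha_1, \ldots, \alpha_k$ are linearly independent.  Lifting back, the affine fixed hyperplanes of the $\aff{r}_i$ are in independent directions and therefore have nonempty intersection, and any common fixed point witnesses that $\gg$ is elliptic.

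Your argument sidesteps Theorem~\ref{thm:elise} entirely: you use only Proposition~\ref{prop:reflection facts}(b) (that $\mov(\aff{r})$ is a \emph{linear} line), Proposition~\ref{boring technical lemma}(a), and the elementary characterization ``$\gg$ is elliptic $\Longleftrightarrow$ $0 \in \mov(\gg)$.''  The dimension count forcing $\mov(\gg) = U$ is clean and correct.  This is more self-contained and arguably more conceptual, since it identifies ellipticity directly with a property of $\mov(\gg)$ rather than going through the linear factorization theory.  On the other hand, the paper's route showcases the first main theorem in action and explicitly produces a fixed point of $\gg$, which is mildly more constructive.
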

\begin{proof}

Suppose that $\gg = \aff{r_1} \cdots \aff{r_k}$ is a minimal $\affR$-factorization of $\gg$ and that $\dim\mov(\gg) = k$.  Projecting both sides into $\GL(V)$ gives a linear reflection factorization
\begin{equation}
\label{eq:projected factorization}
g = r_1 \cdots r_k.
\end{equation}
Consider the moved space $\im(g - 1)$ of the linear map $g$ acting on $V$.  By Proposition~\ref{prop:elliptic facts}(a), $\dim \im(g - 1) = \dim \mov(\gg) = k$.  Therefore, by Theorem~\ref{thm:linear}(c), the factorization \eqref{eq:projected factorization} is a shortest reflection factorization.  It follows from Theorem~\ref{thm:elise} that the linear forms defining the fixed hyperplanes of the $r_i$ are linearly independent in the dual space $V^*$.  Lifting back to the affine setting, the directions of the fixed planes of the $\aff{r_i}$ are linearly independent.  But any collection of independent hyperplanes in affine space has non-empty intersection, i.e., $\bigcap_k \fixaff(\aff{r_i})$ contains some point $a$.  But then $\gg(a) = \aff{r_1} \cdots \aff{r_k}(a) = a$, so $a \in \fixaff(\gg)$ and so $\gg$ is elliptic, as claimed.
\end{proof}

\begin{prop}
\label{prop:reflection length is not too long}
For every $\gg$ in $\GA(\VV)$, we have $\ell_{\affR}(\gg) \leq 2 + \dim\mov(\gg)$.
\end{prop}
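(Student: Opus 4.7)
The plan is to reduce to the elliptic case by peeling off a single reflection. If $\gg$ is already elliptic, then Proposition~\ref{prop:if elliptic then right length} immediately gives $\ell_{\affR}(\gg) = \dim\mov(\gg)$, which certainly satisfies the claimed bound. In the non-elliptic case, the strategy is to produce a reflection $\aff{r}$ such that $\gg' := \aff{r}^{-1}\gg$ is elliptic, apply the elliptic formula to $\gg'$, and then compare $\dim\mov(\gg')$ with $\dim\mov(\gg)$.

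Assume $\gg$ is not elliptic, and fix a point $a \in \VV$ with $\gg(a) \neq a$. To construct $\aff{r}$, I would apply Proposition~\ref{prop:reflection facts}(c): the task is to exhibit an affine hyperplane $\aff{H}$ of $\VV$ that contains neither $a$ nor $\gg(a)$, and then $\aff{r}$ is the unique reflection with $\fixaff(\aff{r}) = \aff{H}$ and $\aff{r}(a) = \gg(a)$, which forces $\gg'(a) = a$. The existence of $\aff{H}$ is precisely where the hypothesis $|\FF| > 2$ enters: in coordinates, an affine hyperplane is the zero locus of an affine functional $\alpha(x) - c$ with $\alpha$ nonzero, and one may pick $c$ outside the set $\{\alpha(a),\alpha(\gg(a))\}$ of size at most $2$ whenever $\FF$ has at least three elements.

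The remainder is a short bookkeeping argument. Let $g = \pi(\gg)$, $r = \pi(\aff{r})$, and $g' = \pi(\gg') = r^{-1}g$. Proposition~\ref{prop:if elliptic then right length} gives $\ell_{\affR}(\gg') = \dim\mov(\gg')$, while Proposition~\ref{prop:elliptic facts}(a) shows that for any $\aff{h} \in \GA(\VV)$ the set $\mov(\aff{h})$ is an affine translate of $\im(\pi(\aff{h}) - 1)$, so its dimension equals $\dim\im(\pi(\aff{h}) - 1)$, which in turn equals $\ell_R(\pi(\aff{h}))$ by Theorem~\ref{thm:linear}(c). Since $r^{-1}$ is again a reflection in $\GL(V)$ (it fixes the same hyperplane as $r$), the obvious subadditivity of $\ell_R$ under multiplication yields $\ell_R(g') \leq 1 + \ell_R(g)$, hence $\dim\mov(\gg') \leq 1 + \dim\mov(\gg)$. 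Combining everything,
\[
\ell_{\affR}(\gg) \;\leq\; 1 + \ell_{\affR}(\gg') \;=\; 1 + \dim\mov(\gg') \;\leq\; 2 + \dim\mov(\gg).
\]

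The only nontrivial step is the geometric construction of $\aff{H}$ avoiding the two prescribed points: this is exactly what breaks down over $\FF_2$, and so forces the separate treatment of that field in Section~\ref{sec:F_2}. Once $\aff{H}$ is in hand, everything else is an immediate consequence of the elliptic case, the correspondence between affine and linear moved spaces in Proposition~\ref{prop:elliptic facts}(a), and the linear length formula in Theorem~\ref{thm:linear}.
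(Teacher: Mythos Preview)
Your proof is correct and follows essentially the same strategy as the paper: peel off one reflection to make the element elliptic, apply the elliptic formula, and note that multiplying by a reflection changes the dimension of the moved space by at most $1$. The only difference is in how you justify that last step: the paper invokes Proposition~\ref{boring technical lemma}(a) directly ($\mov(\aff{r}\cdot\gg) \subseteq \mov(\aff{r}) + \mov(\gg)$), whereas you pass through the linear picture via Proposition~\ref{prop:elliptic facts}(a) and Theorem~\ref{thm:linear}(c) and use the triangle inequality for $\ell_R$ in $\GL(V)$. Both routes are equally short and yield the same bound.
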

\begin{proof}
Let $\gg$ be an arbitrary element of $\GA(\VV)$.  As in the proof of Proposition~\ref{prop:reflection facts}(d), there is a reflection $\aff{r} \in \affR$ such that the map $\aff{r} \cdot \gg$ is elliptic and so 
\[
\ell_{\affR}(\gg) = \ell_{\affR}\left(\aff{r}^{-1} \cdot (\aff{r} \cdot \gg)\right)
  \leq 1 + \ell_{\affR}(\aff{r} \cdot \gg)
  = 1 + \dim \mov(\aff{r} \cdot \gg)
\]
by the triangle inequality and Proposition~\ref{prop:if elliptic then right length}.  Finally, since $\dim \mov(\aff{r}) = 1$, we have by Proposition~\ref{boring technical lemma}(a) that $\dim \mov(\aff{r} \cdot \gg) \leq 1 + \dim \mov(\gg)$, and the claim follows.
\end{proof}

For each non-elliptic element $\gg$ in $\GA(\VV)$, it follows from Propositions~\ref{boring technical lemma}(b),~\ref{prop:elliptic converse}, and~\ref{prop:reflection length is not too long} that $\ell_{\affR}(\gg)$ is equal to either $\dim\mov(\gg) + 1$ or $\dim\mov(\gg) + 2$. The next two results distinguish these cases.

\begin{prop}
\label{prop:if parabolic then right length}
If $\gg$ is parabolic then $\ell_{\affR}(\gg) = 1 + \dim\mov(\gg)$.
\end{prop}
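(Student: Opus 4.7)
The plan is to prove matching bounds $d + 1 \leq \ell_{\affR}(\gg) \leq d + 1$ with $d := \dim\mov(\gg)$. The lower bound is nearly already done: Proposition~\ref{boring technical lemma}(b) yields $\ell_{\affR}(\gg) \geq d$, and the contrapositive of Proposition~\ref{prop:elliptic converse} applied to the parabolic (hence non-elliptic) $\gg$ rules out equality, forcing $\ell_{\affR}(\gg) \geq d + 1$. For the upper bound, I will construct a single reflection $\aff{r}$ such that $\aff{r}\gg$ is elliptic with $\dim\mov(\aff{r}\gg) = d$. Then Proposition~\ref{prop:if elliptic then right length} yields $\ell_{\affR}(\aff{r}\gg) = d$, and the triangle inequality closes the argument: $\ell_{\affR}(\gg) \leq 1 + d$.

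To construct $\aff{r}$, fix an origin $o \in \VV$ and write $\gg$ in matrix form with linear part $g = \pi(\gg)$ and translation part $\lambda := \gg(o) - o$. Set $U := \im(g-1)$ and $K := \ker(g-1)$; parabolicity of $\gg$ gives $\lambda \notin U$ (so $\lambda \neq 0$) and $g \neq 1$ (so $K \neq V$). Choose a linear form $\alpha \in V^*$ that is nonzero, vanishes on $K$, and satisfies $\alpha(\lambda) \neq -1$. Such an $\alpha$ exists because the space of forms vanishing on $K$ has dimension $d \geq 1$, and because $|\FF| > 2$ leaves enough scalars to sidestep the single bad value $\alpha(\lambda) = -1$. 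By Proposition~\ref{Vectors}, the map $r(x) := x + \alpha(x)\lambda$ is then a linear reflection. Promote $r$ to the affine reflection $\aff{r}$ whose matrix-form translation part is $-(1 + \alpha(\lambda))\lambda \in \FF \lambda$; a direct computation shows $\aff{r}(\gg(o)) = o$, so $\aff{r}\gg$ is elliptic with $o$ as a fixed point.

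The core calculation is $\dim\im(rg - 1) = d$. Since $\aff{r}\gg$ is elliptic, Remark~\ref{rmk:elliptic moved space} identifies $\mov(\aff{r}\gg)$ with $\im(rg - 1)$, so this is exactly what we need. Starting from the identity $(rg - 1)(x) = (g-1)(x) + \alpha(g(x))\,\lambda$, the two summands lie in the independent subspaces $U$ and $\FF\lambda$ (independent because $\lambda \notin U$). Thus any $x \in \ker(rg-1)$ must satisfy both $(g-1)(x) = 0$ and $\alpha(g(x)) = 0$. The first says $x \in K$, and then $g(x) = x$ combined with $\alpha|_K = 0$ makes the second automatic. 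Hence $\ker(rg-1) = K$, and rank-nullity gives $\dim\im(rg-1) = \dim V - \dim K = d$, as required.

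The main obstacle is the tailored choice of $\aff{r}$. Any reflection $\aff{r}'$ that merely makes $\aff{r}'\gg$ elliptic will in general inflate the moved-space dimension to $d+1$, which only reproves the known bound $\ell_{\affR}(\gg) \leq d + 2$ from Proposition~\ref{prop:reflection length is not too long}. Aligning the moved vector of $\aff{r}$ with $\lambda$ itself (so that it lies outside $U$ in a controlled way) and further demanding $\alpha|_K = 0$ is precisely what produces the needed rank drop of $r g - 1$ from the generic value $d + 1$ down to $d$, turning the upper bound $d + 2$ into $d + 1$.
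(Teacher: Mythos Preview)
Your proof is correct and follows essentially the same strategy as the paper's: find a reflection $\aff{r}$ whose linear fixed space contains $\fixlin(\gg)$ and which sends $\gg(a)$ back to $a$, so that $\aff{r}\gg$ is elliptic with $\dim\mov(\aff{r}\gg)\le d$, then combine with the lower bound from Propositions~\ref{boring technical lemma}(b) and~\ref{prop:elliptic converse}. The only difference is packaging: the paper invokes Proposition~\ref{prop:reflection facts}(c) and argues via the containment $\fixlin(\aff{r})=H\supseteq\fixlin(\gg)$, whereas you build $\aff{r}$ explicitly in coordinates (with moved direction $\lambda$ and $\alpha|_{K}=0$) and compute $\ker(rg-1)=K$ directly; both uses of $|\FF|>2$ amount to the same obstruction.
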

\begin{proof}
We refine the proof of Proposition~\ref{prop:reflection length is not too long}.  Suppose that $\gg$ in $\GA(\VV)$ is parabolic. Since $\gg$ is not a translation, its linear fixed space $\fixlin(\gg)$ is a proper subspace of $V$.  Let $H$ be a hyperplane in $V$ containing $\fixlin(\gg)$. Choose any point $a \in \VV$.  Since $|\FF| > 2$, it follows that the two hyperplanes $a + H$ and $\gg(a) + H$ do not cover $\VV$.  Let $b$ be a point not contained in either of these planes, so that $\aff{H} := b + H$ is a hyperplane in $\VV$ that does not contain either $a$ or $\gg(a)$. Thus, by Proposition~\ref{prop:reflection facts}(c), there is a reflection $\aff{r}$ sending $\gg(a) \mapsto a$ and fixing $\aff{H}$.  

By construction, $(\aff{r} \cdot \gg)(a) = a$, so $\aff{r} \cdot \gg$ is elliptic.  Moreover, $\fixlin(\aff{r}) = H \supseteq \fixlin(\gg)$, so $\fixlin(\aff{r} \cdot \gg) \supseteq \fixlin(\gg)$.  It follows that $\dim \fixlin(\aff{r} \cdot \gg) \geq \dim \fixlin(\gg)$.  By Proposition~\ref{prop:elliptic facts}(a) and the rank-nullity theorem, the moved space and linear fixed space of every element of $\GA(\VV)$ have complementary dimensions; it follows that $\dim \mov(\aff{r} \cdot \gg) \leq \dim \mov(\gg)$.  Therefore
\[
\ell_{\affR}(\gg) \leq 1 + \dim \mov(\aff{r} \cdot \gg) \leq 1 + \dim \mov(\gg),
\]
and the result follows by Propositions~\ref{boring technical lemma}(b) and~\ref{prop:elliptic converse}.
\end{proof}

\begin{prop}
\label{prop:parabolic converse}
If $\gg$ is hyperbolic then $\ell_{\affR}(\gg) = 2 + \dim\mov(\gg)$.
\end{prop}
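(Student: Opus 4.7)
Since $\gg$ is hyperbolic it is a nontrivial translation, so $\mov(\gg)$ is the singleton $\{\lambda\}$ with $\lambda\neq 0$ and $\dim\mov(\gg)=0$; the assertion therefore reduces to proving $\ell_{\affR}(\gg)=2$.  The lower bound $\ell_{\affR}(\gg)\geq 2$ is immediate: $\gg$ is not the identity, and cannot be a single reflection either, since every reflection has a nonempty affine fixed space while $\fixaff(\gg)=\varnothing$.

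For the upper bound the plan is to adapt the rectification trick from the proofs of Propositions~\ref{prop:reflection facts}(d) and~\ref{prop:reflection length is not too long}.  First I fix any point $a\in\VV$ and choose a hyperplane $\aff{H}\subset\VV$ avoiding both $a$ and $\gg(a)=a+\lambda$; such an $\aff{H}$ exists over any field with $|\FF|>2$ (pick a nonzero linear form on $V$ and a value in $\FF$ distinct from its two values at $a$ and $a+\lambda$).  Proposition~\ref{prop:reflection facts}(c) then supplies a reflection $\rr\in\affR$ fixing $\aff{H}$ and satisfying $\rr(\gg(a))=a$, so that $\rr\cdot\gg$ fixes $a$ and is therefore elliptic.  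Since $\gg$ is a translation, $\pi(\rr\cdot\gg)=\pi(\rr)$ is a linear reflection, giving $\dim\im(\pi(\rr)-1)=1$; combining Remark~\ref{rmk:elliptic moved space} with Proposition~\ref{prop:if elliptic then right length} yields $\ell_{\affR}(\rr\cdot\gg)=\dim\mov(\rr\cdot\gg)=1$, so $\rr\cdot\gg$ is itself a single reflection $\rr'$.  Consequently $\gg=\rr^{-1}\rr'$ is a product of two reflections, matching the lower bound.

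Once the elliptic classification (Proposition~\ref{prop:if elliptic then right length}) is in hand, every step above is routine, so there is no substantive obstacle.  The only new geometric input is the existence of a hyperplane avoiding two prescribed points in $\VV$, which is exactly where the running hypothesis $|\FF|>2$ gets used.
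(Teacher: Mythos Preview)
Your argument is correct and follows essentially the same route as the paper: both observe that a hyperbolic element is a nontrivial translation with $\dim\mov(\gg)=0$, rule out lengths $0$ and $1$ by noting $\gg$ is neither the identity nor a reflection, and bound the length above by $2$. The only difference is cosmetic: the paper simply invokes the already-proved Proposition~\ref{prop:reflection length is not too long} for the upper bound, whereas you re-derive that bound explicitly in this special case by constructing the two-reflection factorization.
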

\begin{proof}
If $\gg$ is hyperbolic then by definition $\gg$ is a translation, $\mov(\gg)$ is a singleton set, and $\dim\mov(\gg) = 0$.  Thus, by Propositions~\ref{boring technical lemma}(b) and~\ref{prop:reflection length is not too long}, we have $\ell_{\affR}(\gg) \in \{0, 1, 2\}$.  Since $\gg$ is not the identity, $\ell_{\affR}(\gg) \neq 0$, and since $\gg$ is not a reflection, $\ell_{\affR}(\gg) \neq 1$.
\end{proof}

Finally, Theorem~\ref{main affine theorem} follows immediately from Propositions~\ref{prop:if elliptic then right length},~\ref{prop:if parabolic then right length}, and~\ref{prop:parabolic converse}.

\subsection{The case of the field of two elements}
\label{sec:F_2}

In this section, we detail the ways in which the preceding story changes over the field with two elements.  As a first step, we give a new version of the tripartite classification (Definition~\ref{def:tripartite}) of elements of $\GA(\VV)$.

\begin{definition}
\label{def:tripartite for F_2}

For any field $\FF$, we say that an element $\gg$ of $\GA(\VV)$ is
\begin{compactitem}
\item \emph{elliptic}, if $\gg$ fixes any point of $\VV$;
\item \emph{hyperbolic}, if $\fixaff(\gg) = \varnothing$ and $(a + \fixlin(\gg)) \cup (\gg(a) + \fixlin(\gg)) = \VV$ for every point $a$ in $\VV$; and
\item \emph{parabolic}, otherwise.
\end{compactitem}
\end{definition}

Observe that in the case $|\FF| > 2$, this definition is equivalent to Definition~\ref{def:tripartite}: every translation has linear fixed space $V$ and so is hyperbolic under the new definition, while for any $\gg \in \GA(\VV)$ such that $\fixaff(\gg) = \varnothing$ and $\fixlin(\gg) \neq V$, we have that $(a + \fixlin(\gg)) \cup (\gg(a) + \fixlin(\gg))$ is a union of two proper subspaces of $\VV$ and so is not equal to $\VV$.  However, over $\FF_2$, it is possible for a union of two proper affine subspaces to equal the full affine space $\VV$.

\begin{example}
\label{ex:hyperbolic non-translation}
Let $\VV$ be the (coordinatized) affine space $\FF_2^2$, and let $\gg$ be the map sending $(x, y) \mapsto (x + 1, x + y)$.  Thus $\gg$ may be represented by the matrix $\begin{bmatrix} 1 & 0 & 1 \\ 1 & 1 & 0 \\ 0 & 0 & 1\end{bmatrix}$ in $\GL_3(\FF_2)$.  Then $\fixaff(\gg) = \varnothing$ and $\fixlin(\gg)$ is the line $x  = 0$ in $V$.  For any point $a = (a_1, a_2) \in \VV$, it follows that $(a + \fixlin(\gg)) \cup (\gg(a) + \fixlin(\gg)) = \{(x, y) : x = a_1\} \cup \{(x, y) : x = a_1 + 1\} = \VV$.  Thus this element $\gg$ is a non-translative hyperbolic element.
\end{example}

We now briefly catalogue how this changed definition affects the results of the preceding subsections.  Proposition~\ref{prop:elliptic facts}, Remark~\ref{rmk:elliptic moved space}, and Proposition~\ref{prop:reflection facts}(a, b, c) are entirely valid over $\FF_2$.  
Proposition~\ref{prop:reflection facts}(d), that $\GA(\VV)$ is generated by reflections, is true \emph{except} in the case that $\VV$ is a $1$-dimensional affine space over $\FF_2$.  In this case, the group $\GA(\VV)$ has two elements, the identity and a translation; in particular, it contains no reflections at all.  The given proof breaks down in the following step: for the non-identity element $\gg$ and a point $a$ in $\VV$, there is no hyperplane (point) $\aff{H}$ that does not intersect $a$ or $\gg(a)$.  (When the underlying field has size larger than $2$, finding a suitable $\aff{H}$ is uninteresting: every parallelism class of hyperplanes contains at least $3$ planes, while $a$ and $\gg(a)$ each belong to exactly one member of the class.)  When $\FF = \FF_2$ and the dimension of $\VV$ is larger than $1$, it again is possible to make the choice of a hyperplane not containing $\gg(a)$ or $a$: let $H$ be a hyperplane in $V$ that contains the vector $\gg(a) - a$, so that one of the two affine hyperplanes in $\VV$ that are parallel to it contains both $a$ and $\gg(a)$, leaving the other to contain neither.  Thus, going forward we restrict our analysis to the case that $\VV$ has dimension at least $2$.

The proofs of Propositions~\ref{boring technical lemma},~\ref{prop:if elliptic then right length}, and~\ref{prop:elliptic converse} (establishing that the correctness of Theorem~\ref{main affine theorem} for elliptic elements in $\GA(\VV)$, as well as the general lower bound $\ell_{\affR}(\gg) \geq \dim \mov(\gg)$ for every element $\gg$ in $\GA(\VV)$) are valid as written when $\FF = \FF_2$.  The proof of Proposition~\ref{prop:reflection length is not too long} relies on the choice of a hyperplane not passing through two given points, which (as in the previous paragraph) is fine when $\FF = \FF_2$ and $\dim(\VV) > 1$; the rest of the proof is valid as written over $\FF_2$.

The proof of Proposition~\ref{prop:if parabolic then right length} explicitly invokes the condition $|\FF| > 2$ in order to conclude that two proper subspaces do not cover $\VV$.  Thus, we give here a complete version of the proof over $\FF_2$ (using Definition~\ref{def:tripartite for F_2} in place of Definition~\ref{def:tripartite}).
\begin{proof}[Proof of Proposition~\ref{prop:if parabolic then right length} over the field $\FF_2$.]
Suppose that $\gg$ in $\GA(\VV)$ is parabolic. By Definition~\ref{def:tripartite for F_2}, there exists a point $a$ in $\VV$ such that $U := (a + \fixlin(\gg)) \cup (\gg(a) + \fixlin(\gg)) \subsetneq \VV$.  Since we are working over $\FF_2$, we have that 
\[
U = a + \{0, \gg(a) - a\} + \fixlin(\gg) = a + \left( \FF_2 \cdot (\gg(a) - a) + \fixlin(\gg)\right)
\]
is an affine subspace of $\VV$, and so in fact a proper affine subspace.   Thus there is some hyperplane $\aff{H}$ containing $U$, some point $b$ in $\VV \smallsetminus \aff{H}$, and a hyperplane $\aff{H}' := \aff{H} + (b - a)$ not intersecting $\aff{H}$ and so not containing $a$ or $\gg(a)$.  


The remainder of the proof is the same: by Proposition~\ref{prop:reflection facts}(c), there is a reflection $\aff{r}$ sending $\gg(a) \mapsto a$ and fixing $\aff{H}$.  
By construction, $(\aff{r} \cdot \gg)(a) = a$, so $\aff{r} \cdot \gg$ is elliptic.  Moreover, $\fixlin(\aff{r}) = H \supseteq \fixlin(\gg)$, so $\fixlin(\aff{r} \cdot \gg) \supseteq \fixlin(\gg)$.  It follows that $\dim \fixlin(\aff{r} \cdot \gg) \geq \dim \fixlin(\gg)$.  By Proposition~\ref{prop:elliptic facts}(a) and the rank-nullity theorem, the moved space and linear fixed space of every element of $\GA(\VV)$ have complementary dimensions; it follows that $\dim \mov(\aff{r} \cdot \gg) \leq \dim \mov(\gg)$.  Therefore
\[
\ell_{\affR}(\gg) \leq 1 + \dim \mov(\aff{r} \cdot \gg) \leq 1 + \dim \mov(\gg),
\]
and the result follows by Propositions~\ref{boring technical lemma}(b) and~\ref{prop:elliptic converse}.
\end{proof}

Over any field, the proof of Proposition~\ref{prop:parabolic converse} is a valid proof of the fact that translations have reflection length $2$.  However, as Example~\ref{ex:hyperbolic non-translation} shows, there are additional hyperbolic elements over $\FF_2$.  
 Thus, we give here an extension of the proof over $\FF_2$ (using Definition~\ref{def:tripartite for F_2} in place of Definition~\ref{def:tripartite}).
\begin{proof}[Proof of Proposition~\ref{prop:parabolic converse} over the field $\FF_2$.]
In light of Propositions~\ref{boring technical lemma}(b),~\ref{prop:reflection length is not too long}, and~\ref{prop:elliptic converse}, it is equivalent to show that if $\gg$ is an element of $\GA(\VV)$ such that $\ell_{\affR}(\gg) = \dim \mov(\gg) + 1$ then $\gg$ is parabolic.  Fix an element $\gg \in \GA(\VV)$ such that $\dim\mov(\gg) = k$ and $\ell_{\affR}(\gg) = k + 1$ for some nonnegative integer $k$.  By Proposition~\ref{prop:if elliptic then right length}, $\gg$ is not elliptic, so $\fixaff(\gg) = \varnothing$.  Let $g:= \pi(\gg)$ be the projection of $\gg$ into $\GL(V)$.  Since $\gg$ is not elliptic, we have by Proposition~\ref{prop:elliptic facts}(a) that $\mov(\gg)$ is a nontrivial translation of $\im(g - 1)$.  In particular, the two have the same dimension: $k = \dim\mov(\gg) = \dim\im(g - 1) = \ell_{R}(g)$ (where the last inequality follows from Theorem~\ref{thm:linear}(c)).
Write
\[
\gg = \aff{r}_1 \cdots \aff{r}_k \cdot \aff{r}_{k + 1}
\]
for some affine reflections $\aff{r}_i$, and let $r_i := \pi(\aff{r}_i)$ be the projection of $\aff{r}_i$ for each $i$.  Then 
\[
g = r_1 \cdots r_k \cdot r_{k + 1}
\]
is a slightly-longer-than-minimal reflection factorization of $g$.  

By Proposition~\ref{Vectors}, we may choose for $i = 1, \ldots, k + 1$ a nonzero vector $v_i$ in the moved space of $\aff{r}_i$ (equivalently, $r_i$) and a nonzero linear functional $\alpha_i$ such that
\[
r_i(\lambda) = \lambda + \alpha_i(\lambda) \cdot v_i
\]
for all $\lambda$ in $V$. By Proposition~\ref{boring technical lemma}(a), we have
\[
\mov(\gg) \subseteq \mov(r_1) + \ldots + \mov(r_{k + 1}) = \spn\{v_1, \ldots, v_{k + 1}\}.
\]
The left side is a $k$-dimensional affine-but-not-linear subspace of $V$, while the right side is a linear subspace spanned by a set of $k + 1$ vectors; it follows that actually these $k + 1$ vectors must be linearly independent.  Therefore, by Theorem~\ref{dual delmas lemma}(b), the fixed spaces of $r_1, \ldots, r_{k + 1}$ must have intersection with codimension exactly $k$, and so the linear forms $\alpha_1, \ldots, \alpha_{k + 1}$ span a subspace of the dual space $V^*$ having dimension exactly $k$.  Choose a $k$-element subset of $[k + 1]$ so that the associated $\alpha_i$ form a basis for their span; say that $\alpha_m$ is the omitted element.  The element $(\rr_{m + 1} \cdots \rr_{k + 1}) \cdot \gg \cdot (\rr_{m + 1} \cdots \rr_{k + 1})^{-1}$ is conjugate to $\gg$ and so has the same reflection length and dimension of moved space; moreover, it can be factored into reflections as $\rr_{m + 1} \cdots \rr_{k + 1} \cdot \rr_1 \cdots \rr_{m}$, where the linear forms associated to the first $k$ reflections are linearly independent.  Thus, replacing $\gg$ by this element, we may assume without loss of generality that $m = k + 1$, that $\alpha_1, \ldots, \alpha_k$ are linearly independent, and that $\alpha_{k + 1}$ is in their span.  Then define $\gg' = \aff{r}_1 \cdots \aff{r}_k$.  Since the fixed planes of the factors on the right side are in linearly independent directions, they have nonempty intersection.  Thus $\gg'$ is an elliptic element of $\GA(\VV)$, having $\dim \mov(\gg') = \ell_{\affR}(\gg') = k$ and $\gg = \gg' \cdot \aff{r}_{k + 1}$.

Since $\gg'$ is elliptic and $\aff{r}_1 \cdots \aff{r}_k$ is a shortest reflection factorization, we have $\fixlin(\gg') =  \bigcap_{i = 1}^k \fixlin(\aff{r}_i) = \bigcap_{i = 1}^k \ker(\alpha_i)$ is a subspace of codimension $k$.  Since $\alpha_{k + 1}$ is in the span of $\alpha_1, \ldots, \alpha_k$, we have that $H := \ker(\alpha_{k + 1}) = \fixlin(\aff{r}_{k + 1})$ contains $\fixlin(\gg')$.  It follows that $\fixlin(\gg)$ contains $\fixlin(\gg')$.  Moreover, by definition $\fixlin(\gg) = \ker(g - 1)$, and so $\codim \fixlin(\gg) = \dim \im(g - 1) = k$, so in fact $\fixlin(\gg) = \bigcap_{i = 1}^k \ker(\alpha_i) \subseteq H$.

Pick a point $b \in \fixaff(\gg')$, and let $a := \aff{r}_{k + 1}^{-1}(b)$.  We claim that $a$ witnesses the fact that $\gg$ is not hyperbolic.  Observe first that
\[
\gg(a) = \gg' ( \aff{r}_{k + 1}(a)) = \gg'(b) = b = \aff{r}_{k + 1}(a).
\]
Therefore
\begin{align*}
(a + \fixlin(\gg)) \cup (\gg(a) + \fixlin(\gg))
& \subset
(a + H) \cup (\gg(a) + H) \\
& =
(a + H) \cup (\aff{r}_{k + 1}(a) + H)
.
\end{align*}
Since $\gg$ is not elliptic, $a \neq \aff{r}_{k + 1}(a)$, and since $\aff{r}_{k + 1}$ is invertible it follows that neither $a$ nor $\aff{r}_{k + 1}(a)$ belongs to $\fixaff(\aff{r}_{k + 1})$.  Therefore the right side of the last equation does not include this fixed space and so does not cover $\VV$.  Since $\gg$ is neither elliptic nor hyperbolic, it is parabolic, as claimed.
\end{proof}

Finally, the preceding results show that, under Definition~\ref{def:tripartite for F_2}, the result of Theorem~\ref{main affine theorem} is valid over every field.

\section{Further remarks}
\label{sec:remarks}

\subsection{Precise statement of other affine results}
\label{sec:affine statements}

As promised in the introduction, we give here the statements of the theorems of Brady--McCammond and Lewis--McCammond--Petersen--Schwer on the reflection length of elements in the group of isometries of real Euclidean space and in an affine Coxeter group, respectively.  (For definitions and terminology related to Coxeter groups, we refer the reader to Humphreys's text \cite{Humphreys}, in particular to Chapter 4.)  These results should be compared with the statement of Theorem~\ref{main affine theorem}.

\begin{thm}[{\cite[Thm.~5.7]{BradyMcCammond}}]
\label{thm:BM}
Let $G$ denote the group of isometries of real Euclidean space $\RR^n$, let $R$ denote the set of reflections in $G$, and let $g$ be an arbitrary element of $G$.  Then the reflection length of $g$ is
\[
\ell_R(g) = \begin{cases}
\dim(\mov(g)) & \textrm{if } g \textrm{ fixes a point, and} \\
\dim(\mov(g)) + 2 & \textrm{otherwise}.
\end{cases}
\]
\end{thm}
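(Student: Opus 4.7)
The plan is to mirror the proof of Theorem~\ref{main affine theorem} while using the rigidity of Euclidean isometries to collapse the parabolic and hyperbolic cases into a single ``$+2$'' case. Concretely, I will prove that $\ell_R(g)$ and $\dim\mov(g)$ always have the same parity; combined with the upper bound $\ell_R(g)\le \dim\mov(g)+2$ and the characterization of equality in the lower bound by ``$g$ fixes a point'', this forces the stated dichotomy.

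First I would establish the upper bound. If $g$ fixes a point $a$, conjugation by a translation moves $a$ to the origin and reduces the computation to the orthogonal group, where Theorem~\ref{thm:linear}(b) yields $\ell_R(g)\le \dim \im(\pi(g)-1)=\dim\mov(g)$; the last equality is Proposition~\ref{prop:elliptic facts}(a). If $g$ fixes no point, set $A:=\pi(g)\in O_n(\RR)$ and use the orthogonal decomposition $V=\im(A-I)\oplus\ker(A-I)$, available because $A$ is orthogonal, to conjugate $g$ into the form $A\cdot t_{b}$ with $0\neq b\in\ker(A-I)$. The orthogonal factor contributes $\dim\im(A-I)=\dim\mov(g)$ Euclidean reflections by Theorem~\ref{thm:linear}(b), and the translation $t_b$ is a product of two reflections in parallel hyperplanes perpendicular to $b$; this gives $\ell_R(g)\le \dim\mov(g)+2$.

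For the lower bound, the inequality $\ell_R(g)\ge\dim\mov(g)$ follows from Proposition~\ref{boring technical lemma}(b), whose proof transfers verbatim since its only input is that every reflection has one-dimensional moved space (Proposition~\ref{prop:reflection facts}(b)). Equality forces $g$ to be elliptic: projecting a minimal factorization $g=\aff{r}_1\cdots\aff{r}_k$ into $O_n(\RR)$ yields another minimal factorization by Theorem~\ref{thm:linear}(b), so by Theorem~\ref{thm:linear2}(b) the fixed hyperplanes $\fixlin(\aff{r}_i)\subset V$ are linearly independent; hence the affine hyperplanes $\fixaff(\aff{r}_i)$ have linearly independent directions, necessarily meet in a common point, and $g$ fixes that point.

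The main obstacle, and the place where Euclidean geometry genuinely departs from the general affine setting, is ruling out $\ell_R(g)=\dim\mov(g)+1$ for non-elliptic $g$. The cleanest route is a determinant parity argument. Every Euclidean reflection has determinant $-1$, so any factorization $g=\aff{r}_1\cdots\aff{r}_k$ forces $\det(g)=(-1)^k$, and in particular $\ell_R(g)\equiv k\pmod 2$. On the other hand, $\det(g)=\det(\pi(g))$, and for the orthogonal map $A:=\pi(g)$ an eigenvalue count (with $a$ copies of $+1$, $b$ copies of $-1$, and $c$ complex conjugate pairs on the unit circle) gives $\det(A)=(-1)^b=(-1)^{b+2c}=(-1)^{\dim\im(A-I)}=(-1)^{\dim\mov(g)}$. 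Hence $\ell_R(g)\equiv\dim\mov(g)\pmod 2$, and combined with the two bounds, $\ell_R(g)-\dim\mov(g)\in\{0,2\}$ with the value $0$ characterizing elliptic elements; this completes the proof.
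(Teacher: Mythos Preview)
The paper does not give its own proof of this statement: Theorem~\ref{thm:BM} appears only in Section~\ref{sec:affine statements} as a quoted result from \cite{BradyMcCammond}, with no argument supplied. So there is nothing in the paper to compare your proof against directly.

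That said, your proof is correct and self-contained. The upper bounds, the lower bound via Proposition~\ref{boring technical lemma}(b), and the characterization of the equality case via Theorem~\ref{thm:linear2}(b) all go through as written. The key step---ruling out $\ell_R(g)=\dim\mov(g)+1$ via the determinant parity $\det(\pi(g))=(-1)^{\dim\operatorname{im}(\pi(g)-I)}$---is exactly what distinguishes the Euclidean case from the general affine setting treated in Theorem~\ref{main affine theorem}: in $\GA(\VV)$ reflections may have arbitrary nonzero determinant (transvections have determinant $1$), so no such parity obstruction exists, and genuinely parabolic elements with $\ell_{\affR}(\gg)=\dim\mov(\gg)+1$ appear. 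Your argument thus gives a clean explanation of why Brady--McCammond's dichotomy has no ``$+1$'' case while the paper's Theorem~\ref{main affine theorem} does.

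One minor remark: when you invoke Proposition~\ref{prop:reflection facts}(b) and Proposition~\ref{boring technical lemma}, you are using results stated in the paper for $\GA(\VV)$ over an arbitrary field; you should note (as you implicitly do) that their proofs go through verbatim for the Euclidean isometry group, since the only property used is that affine reflections have one-dimensional moved space.
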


\begin{thm}[{\cite[Thm.~A]{LMPS}}]
\label{thm:LMPS}
Let $G$ be an affine Coxeter group acting on real Euclidean space $\RR^n$, with reflections $R$, associated finite Coxeter group $G_0$, and projection map $\pi: G \onto G_0$, and let $g$ be an arbitrary element of $G$.  Then the reflection length of $g$ is 
\[
\ell_R(g) = \dim(\mov(g)) + 2d(g)
\]
where $d(g) := \dim_{\mathrm{lin}}(\mov(g)) - \dim(\mov(g))$ and 
for any subset $X$ of $\RR^n$, $\dim_{\mathrm{lin}}(X)$ is defined to be the smallest dimension of a moved space of an element in $G_0$ that contains $X$.
\end{thm}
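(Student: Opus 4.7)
The plan is to parallel the proof structure of Theorem~\ref{main affine theorem}, but with additional care to account for the fact that the reflections in the affine Coxeter group $G$ come only from the underlying root system rather than from the full affine group $\GA(\RR^n)$. The quantity $2d(g)$ records the ``overhead'' of producing translation directions that cannot be accommodated within a minimal factorization of the linear projection $\bar g = \pi(g) \in G_0$.

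For the lower bound, I would start with an arbitrary reflection factorization $g = r_1 \cdots r_k$, pick for each $i$ a root $v_i$ spanning $\mov(r_i)$ via Proposition~\ref{prop:reflection facts}(b), and apply Proposition~\ref{boring technical lemma}(a) to get $\mov(g) \subseteq \spn\{v_1, \ldots, v_k\}$. Because the $v_i$ are roots of the finite Coxeter group $G_0$, a standard Coxeter-theoretic fact identifies their span as the moved space of some element of $G_0$; hence by definition of $\dim_{\mathrm{lin}}$, $k \geq \dim(\spn\{v_1, \ldots, v_k\}) \geq \dim_{\mathrm{lin}}(\mov(g))$. This already handles the elliptic case, where $d(g) = 0$, but gives only half of what is needed in general. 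To sharpen to $k \geq 2\dim_{\mathrm{lin}}(\mov(g)) - \dim(\mov(g))$, I would exploit that when $g$ is non-elliptic the affine fixed hyperplanes $\fixaff(r_i)$ have empty intersection, whereas by Theorem~\ref{thm:elise} independence of the linear forms $\alpha_i$ would force those affine hyperplanes to meet. So extra linear dependencies among the $\alpha_i$ are forced, and each dependency consumes one reflection in the factorization without reducing $\dim(\spn\{v_i\})$; the counting shows that each unit of translation excess $d(g)$ requires two such ``extra'' reflections.

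For the upper bound I would decompose $g = \tau \cdot h$ with $h$ elliptic and $\tau$ a translation whose direction $\lambda$ lies in a complement of $\mov(h)$. The elliptic factor $h$ admits a reflection factorization of length $\dim(\mov(h)) = \dim(\mov(g))$ by lifting Theorem~\ref{thm:linear} through a common fixed point, exactly as in the proof of Proposition~\ref{prop:if elliptic then right length}. To realize $\tau$, I would decompose it as a product of $d(g)$ pure translations along $d(g)$ independent directions that, together with $\mov(h)$, fill out a minimal $G_0$-moved-space containing $\mov(g)$; each such translation factors as a product of two parallel reflections, contributing $2d(g)$ reflections in total. Concatenation gives a factorization of length $\dim(\mov(g)) + 2d(g)$.

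The main obstacle will be the precise accounting in the lower bound: verifying that each unit of translation excess genuinely forces exactly two extra reflections, rather than being amortized across existing reflections in some clever way. This requires a Carter-style analysis of affine reflection factorizations, using crucially that the linear projection of a reflection contributes only a one-dimensional moved line, so that realizing a translation direction not already present in the span of the ``elliptic'' reflections demands both a pair whose linear parts cancel and whose affine offsets produce the requisite nontrivial translation. The definition of $\dim_{\mathrm{lin}}$ as a moved-space-of-$G_0$ hull, rather than an ordinary linear hull, is precisely what enforces this cost in terms of the root-system constraints governing which reflections are available in $G$.
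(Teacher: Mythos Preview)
The paper does not prove Theorem~\ref{thm:LMPS} at all: it is quoted verbatim from \cite{LMPS} in Section~\ref{sec:affine statements} solely as background, to place the paper's own Theorem~\ref{main affine theorem} in context. There is therefore no proof in the paper to compare your proposal against.

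On the substance of your sketch: the upper bound is broadly on the right track, but the lower bound has a genuine gap. You correctly observe that $k \geq \dim_{\mathrm{lin}}(\mov(g))$ follows from $\mov(g) \subseteq \spn\{v_1,\ldots,v_k\}$ together with the fact that a span of roots is a $G_0$-moved space. But the sharpening to $k \geq 2\dim_{\mathrm{lin}}(\mov(g)) - \dim(\mov(g))$ is where the real content lies, and your argument for it is not an argument: saying that empty intersection of the affine fixed hyperplanes ``forces extra linear dependencies among the $\alpha_i$'' and that ``each dependency consumes one reflection'' does not establish that the number of forced dependencies is at least $d(g)$. Indeed, a single dependency among the $\alpha_i$ already suffices to make the affine hyperplanes have empty intersection, regardless of how large $d(g)$ is. The actual obstruction is more subtle: it concerns which translation vectors can be realized as products of affine reflections whose linear parts multiply to a given element of $G_0$, and this is governed by the root-system combinatorics encoded in $\dim_{\mathrm{lin}}$ in a way your sketch does not capture. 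The proof in \cite{LMPS} handles this via a careful analysis that does not reduce to the kind of Carter-style counting you describe.
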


\begin{remark}
Oddly, the formula in Theorem~\ref{thm:LMPS} also gives the correct result in Theorem~\ref{thm:BM} if one takes $G_0$ to be the orthogonal group $O_n(\RR)$: 
for an element $g$ with a fixed point, $\mov(g)$ is a linear subspace of $\RR^n$ and so $\dim_{\mathrm{lin}}(\mov(g)) = \dim(\mov(g))$ and $d(g) = 0$, while if $g$ does not have a fixed point then $\mov(g)$ is a (non-linear) affine subspace, the dimension of the smallest linear subspace of $\RR^n$ containing $\mov(g)$ is $\dim(\mov(g)) + 1$, and every linear subspace of $\RR^n$ is a moved space of some element in $G_0 = O_n(\RR)$, and so $\dim_{\mathrm{lin}}(\mov(g)) = \dim(\mov(g)) + 1$ and $d(g) = 2$.

It is not clear whether there is a similar formula that simultaneously encompasses these two results and our second main theorem.
\end{remark}

\begin{remark}
In the linear case, Theorem~\ref{thm:linear} shows that an element $w$ of a finite real reflection group $W$ acting on the Euclidean vector space $V = \RR^n$ has the same reflection length in $W$, in any sub-reflection group that contains it, in the orthogonal group $O(V)$, and in the general linear group $\GL(V)$.  We see that this is \emph{not} true in the affine setting.  For example, the glide reflection
\[
\gg : (x, y) \mapsto (x + 1, -y)
\]
has reflection length $3$ as an isometry of $\RR^2$, but it has reflection length $2$ as an element of $\GA(\RR^2)$:
$\gg = \aff{r}_1 \cdot \aff{r}_2$ where $\aff{r}_1 : (x, y) \mapsto (x + y, -y)$ fixes the line $y = 0$ and $\aff{r}_2 : (x, y) \mapsto (x - y + 1, y)$ fixes the line $y = 1$.   
Similarly, the reflection length of an isometry $w$ of $\RR^n$ varies depending on whether one allows all orthogonal reflections or only those belonging to an affine Coxeter group containing $w$, and in the latter case on the choice of which particular Coxeter group $w$ belongs to.
\end{remark}

\subsection{Other work on orthogonal linear groups}

In Theorem~\ref{thm:linear}(b), we slightly mischaracterized the result of Scherk on the orthogonal linear group: he worked in considerably more generality, considering the orthogonal group $O(V, \beta)$ over any finite-dimensional vector space with any nondegenerate symmetric bilinear form $\beta$, provided that the underlying field has characteristic other than $2$.  In this setting, the stated result is valid as long as the form $\beta$ is anisotropic, and this is the case over $\RR$.  Scherk also gives formulas for reflection length in the case that $\beta$ is isotropic.  We are not aware of a corresponding affine result at this level of generality.

Other work on the factorization structure of $O(V)$ for general $V$ includes the case of characteristic $2$ \cite{Dieudonne, Callan}, Wall's parametrization of $O(V)$ in terms of the moved space \cite{Wall1, Wall2} in the general setting, and the close association with the subspace poset in the anisotropic case \cite{BradyWattOrthogonal}.  For a thorough treatment, see \cite[Ch.~11]{Taylor}.

\subsection{Other work on general linear groups}

Theorem~\ref{thm:linear}(c) is not identical to \cite[Thm.~1.1]{Dieudonne}, which is concerned with a slightly more refined question: tracking the number of \emph{transvections} (see Remark~\ref{rmk:transvection} for the definition) needed in a reflection factorization of an element of the general linear group.   As a result, the theorem statement there must be combined with a short additional argument (using the unnumbered remark immediately following the proof of Theorem 1.1 in \cite[p.~152]{Dieudonne}) to give our Theorem~\ref{thm:linear}(c).

It is possible to give a shorter direct proof of Theorem~\ref{thm:linear}(c) if one does not request the additional information about how many transvections (versus semisimple reflections) are used.  Such a proof may be found (in English, and available online) as \cite[Prop.~2.16]{HLR}.  (The authors of that paper were unaware of Dieudonn\'e's prior work.)

\subsection{Affine version of the first main theorem?}

One natural question to ask is whether there is an affine version of Theorem~\ref{thm:linear2} and the first main theorem, that is, whether there is a natural geometric way to tell whether a product of affine reflections is a shortest factorization of its product just by examining the factors.  We are not aware of any such result in any of the three relevant contexts (Coxeter groups, Euclidean isometries, general affine group).

\subsection{Classification of hyperbolic elements over $\FF_2$}

As discussed in Section~\ref{sec:F_2}, the set of hyperbolic elements over $\FF_2$ includes the nontrivial translations, but also additional maps.  These are not difficult to classify: if $\VV$ is an affine space over $\FF_2$, then every non-translation hyperbolic element in $\GA(\VV)$ is a glide reflection, that is, it consists of a reflection followed by a translation.  Indeed, if $(a + \fixlin(\gg)) \cup (\gg(a) + \fixlin(\gg)) = \VV$, we must either have that $\fixlin(\gg) = V$ or that $\fixlin(\gg)$ is a hyperplane in $V$.  In the former case, $\gg$ is a translation.  In the latter case, this implies that the projection $g := \pi(\gg)$ in $\GL(V)$ is a reflection, and so that $\iota(g)$ is a reflection in $\GA(\VV)$.  Any element of $\GA(\VV)$ differs from its image under $\iota\circ \pi$ by a translation (an element in the kernel of $\pi$), which verifies the claim.

\subsection{Coincidences among small groups}

The following group isomorphisms are amusing; as far as we know, they are best explained by the ``law of small numbers" \cite{lawofsmallnumbers}.

The group $\GL_1(\FF_2)$ is the trivial group, fixing the one nonzero vector in $\FF_2$ and isomorphic to the symmetric group $S_1$.  The group $\GA_1(\FF_2)$ is the group with two elements, permuting the two points in $\FF_2$ and isomorphic to the symmetric group $S_2$.

The group $\GL_2(\FF_2)$ is isomorphic to the symmetric group $S_3$, permuting the three lines through the origin in $\FF_2^2$.  In fact, this is an isomorphism as reflection groups, in that the reflections in $\GL_2(\FF_2)$ correspond to the transpositions in $S_3$, and these are exactly the reflections in the standard representation of $S_3$ as a Coxeter group acting in $\RR^2$ (equivalently, as permutation matrices acting on $\RR^3$).

The group $\GA(\FF_2^2)$ is isomorphic as a reflection group to the symmetric group $S_4$, permuting the four points of $\FF_2^2$.  With this identification, the transpositions are again the reflections, the eight three-cycles are the parabolic elements, the three fixed-point-free involutions are the translations, and the six four-cycles are the ``extra" hyperbolic elements.

\section*{Acknowledgements}

The authors thank Jon McCammond for helpful comments, and they thank Vic Reiner for his support, encouragement, and suggestions.  EGD was supported in part by NSF grants DMS-1148634, 1601961.  JBL was supported in part by NSF grant DMS-1401792.

\bibliography{AffineBib}{}

\begin{thebibliography}{LMPS17}

\bibitem[Ber94]{Berger}
Marcel Berger.
\newblock {\em Geometry. {I}}.
\newblock Universitext. Springer-Verlag, Berlin, 1994.
\newblock Translated from the 1977 French original by M. Cole and S. Levy,
  Corrected reprint of the 1987 translation.

\bibitem[BH99]{BridsonHaefliger}
Martin~R. Bridson and Andr\'e Haefliger.
\newblock {\em Metric spaces of non-positive curvature}, volume 319 of {\em
  Grundlehren der Mathematischen Wissenschaften [Fundamental Principles of
  Mathematical Sciences]}.
\newblock Springer-Verlag, Berlin, 1999.

\bibitem[BM15]{BradyMcCammond}
Noel Brady and Jon McCammond.
\newblock Factoring {E}uclidean isometries.
\newblock {\em Internat. J. Algebra Comput.}, 25(1-2):325--347, 2015.

\bibitem[Bou02]{Bourbaki}
Nicolas Bourbaki.
\newblock {\em Lie groups and {L}ie algebras. {C}hapters 4--6}.
\newblock Elements of Mathematics (Berlin). Springer-Verlag, Berlin, 2002.
\newblock Translated from the 1968 French original by Andrew Pressley.

\bibitem[BW02]{BradyWattOrthogonal}
Thomas Brady and Colum Watt.
\newblock A partial order on the orthogonal group.
\newblock {\em Comm. Algebra}, 30(8):3749--3754, 2002.

\bibitem[BW06]{BradyWattEuclidean}
Thomas Brady and Colum Watt.
\newblock On products of {E}uclidean reflections.
\newblock {\em Amer. Math. Monthly}, 113(9):826--829, 2006.

\bibitem[Cal76]{Callan}
David Callan.
\newblock The generation of {$S{\rm p}(F_{2})$} by transvections.
\newblock {\em J. Algebra}, 42(2):378--390, 1976.

\bibitem[Car72]{Carter}
R.~W. Carter.
\newblock Conjugacy classes in the {W}eyl group.
\newblock {\em Compositio Math.}, 25:1--59, 1972.

\bibitem[Die55]{Dieudonne}
Jean Dieudonn\'e.
\newblock Sur les g\'en\'erateurs des groupes classiques.
\newblock {\em Summa Brasil. Math.}, 3:149--149, 1955.

\bibitem[Guy88]{lawofsmallnumbers}
Richard~K. Guy.
\newblock The strong law of small numbers.
\newblock {\em Amer. Math. Monthly}, 95(8):697--712, 1988.

\bibitem[HLR17]{HLR}
Jia Huang, Joel~Brewster Lewis, and Victor Reiner.
\newblock Absolute order in general linear groups.
\newblock {\em Journal of the London Mathematical Society}, 95(1):223--247,
  2017.

\bibitem[Hum90]{Humphreys}
James~E. Humphreys.
\newblock {\em Reflection groups and {C}oxeter groups}, volume~29 of {\em
  Cambridge Studies in Advanced Mathematics}.
\newblock Cambridge University Press, Cambridge, 1990.

\bibitem[LMPS17]{LMPS}
Joel~Brewster Lewis, Jon McCammond, T.~Kyle Petersen, and Petra Schwer.
\newblock Computing reflection length in an affine {C}oxeter group.
\newblock Accepted, \emph{Trans. Am. Math. Soc.} \texttt{arXiv:1710.06920},
  2017.

\bibitem[Sch50]{Scherk}
Peter Scherk.
\newblock On the decomposition of orthogonalities into symmetries.
\newblock {\em Proc. Amer. Math. Soc.}, 1:481--491, 1950.

\bibitem[Tay92]{Taylor}
Donald~E. Taylor.
\newblock {\em The geometry of the classical groups}, volume~9 of {\em Sigma
  Series in Pure Mathematics}.
\newblock Heldermann Verlag, Berlin, 1992.

\bibitem[Wal59]{Wall1}
G.~E. Wall.
\newblock The structure of a unitary factor group.
\newblock {\em Inst. Hautes \'Etudes Sci. Publ. Math.}, (1):23 pp. (1959),
  1959.

\bibitem[Wal63]{Wall2}
G.~E. Wall.
\newblock On the conjugacy classes in the unitary, symplectic and orthogonal
  groups.
\newblock {\em J. Austral. Math. Soc.}, 3:1--62, 1963.

\end{thebibliography}
\bibliographystyle{alpha}

\end{document}